\documentclass{amsart}
\usepackage{amsthm}
\usepackage{amssymb}
\usepackage{amscd}
\usepackage{a4wide}
\usepackage[latin1]{inputenc}
\newtheorem*{thm}{Theorem}
\newtheorem*{prop}{Proposition}
\newtheorem*{cor}{Corollary}
\newtheorem*{lem}{Lemma}
\newtheorem*{defn}{Definition}

\newcommand{\cB}{\mathcal B}

\newcommand{\AH}{{A\# H}}
\newcommand{\sAH}{\sigma[{_{\AH}A}]}
\newcommand{\sAB}{\sigma[{_BA}]}
\newcommand{\AAH}{{A^e\bowtie H}}
\newcommand{\sAAH}{\sigma[{_{\AAH}A}]}
\newcommand{\EndX}[2]{{\mathrm{End}}_{#1}\:(#2)}
\newcommand{\HomX}[3]{{\mathrm{Hom}}_{#1}\:(#2, #3)}
\newcommand{\Ker}[1]{{\mathrm{Ker}}\:(#1)}

\newcommand{\Rad}[1]{{\mathrm{Rad}}\:(#1)}
\newcommand{\ov}[1]{{\overline{#1}}}

\newcommand{\EA}{\widehat{A}}
\title{Regular and Biregular module algebras}
\author{Christian Lomp}
\address{University of Porto, Department of pure Mathematics, Rua Campo Alegre 687, 4169-007 Porto (PORTUGAL)\\
\textit{clomp@fc.up.pt}}

\keywords{Regular rings, biregular rings, Hopf actions, smash products, envelopping Hopf algebroid}
\thanks{This work was partially supported by Centro de Matem\'{a}tica da Universidade do Porto (CMUP), financed by FCT
(Portugal) through the programs POCTI (Programa Operacional Ci\^{e}ncia, Tecnologia, Inova\c{c}\~{a}o) and POSI
(Programa Operacional Sociedade da Informa\c{c}\~{a}o), with national and European community structural funds.}

\begin{document}
\maketitle
\begin{abstract}
Motivated by the study of von Neumann regular skew groups as carried out by Alfaro, Ara and del Rio in \cite{AlfaroAraRio} we investigate regular and biregular Hopf module algebras. If  $A$ is an algebra  with an action by an affine Hopf algebra $H$, then any $H$-stable left ideal of $A$ is a direct summand if and only if $A^H$ is regular and the invariance functor $(-)^H$ induces an equivalence of $A^H$-Mod to the Wisbauer category of $A$ as $\AH$-module. Analogously we show a similar statement for the biregularity of $A$ relative to $H$ where $A^H$ is replaced by $R=Z(A)\cap A^H$ using the module theory of $A$ as a module over $\AAH$ the envelopping Hopf algebroid of $A$ and $H$. We show that every two-sided $H$-stable ideal of $A$ is generated by a central $H$-invariant idempotent if and only if $R$ is regular and $A_m$ is $H$-simple for all maximal ideals $m$ of $R$. Further sufficient conditions are given for $\AH$ and $A^H$ to be regular.
\end{abstract}

\section{Introduction}

Motivated by the study of  von Neumann regular of skew group rings by Alfaro et al.  in \cite{AlfaroAraRio} and by the studies of the regularity of fix rings by  Goursad et all in \cite{GOPV} we look at the regularity of Hopf module algebras, their smash products and their subrings of invariants. To achieve our goal we will work in the following more general setting:

Let $k$ be a commutative ring. An extension $A\subseteq B$ of $k$-algebras is said to have an additional module structure if there exists a ring homomorphism $\Psi:B\rightarrow \EndX{k}{A}$ such that $\Psi(a)=L_a$ for all $a\in A$, where $L_a$ denotes the left multiplication of $a$ on $A$. Then $A$ is a cyclic left $B$-module with $B$-action $b\cdot a := \Psi(b)(a)$ for all $b\in B, a\in A$. Moreover $\alpha:B  \longrightarrow A$ with $(b)\alpha = b\cdot 1$ is an epimorphism of left $B$-modules. Note that we will write homomorphisms oposite of scalars. Furthermore $\phi:\EndX{B}{A}\longrightarrow A$ with $\phi(f)=(1)f$ defines a ring homomorphism whose image is denote by $A^B$. In particular
$$A^B = \{a \in A \mid \forall b\in B: b\cdot a = (b)\alpha a\} = \{a \in A \mid \forall b\in B \:\: \forall a'\in A: b\cdot (a'a) = (b\cdot a')a\}.$$
Defining for any $B$-module $M$:
$$M^B = \{m\in M\mid \forall b\in B \:\: \forall a \in A: b\cdot (am) = (b\cdot a)m\}$$ one also has functorial isomorphisms
$$ \HomX{B}{A}{M} \longrightarrow M^B \:\:\: f \mapsto (1)f$$
such that $\HomX{B}{A}{ - }$ and $( - )^B$ are isomorphic functors (see \cite{Lomp} for details).
In the terminology of \cite{Caenepeel2}, $B$ is an $A$-ring with a right grouplike character.

Examples of the described situation are abundant in the theory of Hopf algebra actions where a Hopf algebra $H$ (or more general a weak Hopf algebra) acts on an algebra $A$ and $A\subseteq B=\AH$ is an extension with additional module structure. This also includes group action and Lie actions. Further examples are given by the envelopping algebra $A\subseteq A^e$  or more generally by the envelopping Hopf algebroid $A^e\bowtie H$ as defined in \cite{Lomp} (see also \cite{Lomp05} or \cite{ConnesMoscovici}), $k$-algebras $A$ with an involution $*$ with $B=A^e * G$ where $G=\langle \sigma \rangle$ is group generated by the automorphism $\sigma$ of $A^e$ defined by $\sigma(a\otimes b)=b^* \otimes a^*$  or certain extensions $A\subseteq B$ arrising in the study of Banach algebras (see Cabrera et al. \cite{Cabrera}).

In this paper we will characterize regular and biregular $H$-module algebras, generalising some known results on the regularity skew group rings.

All rings will be associative and unital. Ring homomorphisms are supposed to respect the unit.
Throughout the text $k$ will denote a commutative ring and $A$ a $k$-algebra. We denote by $A^e := A \otimes A^{op}$ the enveloping algebra of $A$  whose multiplication is defined as $(a\otimes b)(a'\otimes b')=aa'\otimes b'b$.

\section{Regular Modules}
John von Neumann defined a ring $R$ to be regular  if for any element $a\in R$ there exists an element $b\in R$ such that $a=aba$. He showed in \cite{Neumann} that $R$ is regular if and only if every  cyclic (finitely generated) left (right) ideal of $R$ is a direct summand. Later Auslander proved that the regularity of a ring can also be characterised by the property that any module is fla or equivalently that any submodule of a module is pure. Several author's have transfered the regularity condition to modules. A.Tuganbaev in \cite{Tuganbaev} calls a left $R$-module $M$ regular if any cyclic (finitely generated) submodule is a direct summand using the lattice theoretical approach, while J.Zelmanowitz in \cite{Zelmanowitz72} followed the original elementwise definition of von Neumann and called a left $R$-module $M$ regular if for any  $m\in M$ there exists $f\in \HomX{R}{M}{R}$ such that $(m)fm = m$.

The module theoretic version of Auslander's charcterisation had been carried out by Fieldhouse \cite{Fieldhouse} where he called a left $R$-module regular if any of its submodule is pure in the sense of P.M.Cohen. R.Wisbauer \cite{Wisbauer_regular} used his ideas to define regularity for nonassociative rings (see also \cite[Chapter 34]{wisbauer}): Let $R$ be an arbitrary ring and $M$ a left $R$-module.  The Wisbauer category $\sigma[M]$ is the subcategory of $R$-Mod whose objects are the submodules of $M$-generated modules, i.e. submodules of factor modules of direct sums of copies of $M$. A module $P \in \sigma[M]$ is called finitely presented in $\sigma[M]$ if $P$ is finitely generated and every exact sequence in $\sigma[M]$:
$$\begin{CD}
0 @>>> K @>>> L @>>> P @>>> 0  
\end{CD}$$
with $L$ finitely generated implies $K$ to be finitely generated.
Note that $P$ might be finitely presented in $\sigma[M]$ but not in $R$-Mod, for example take any simple module $P=M$.
A short exact sequence in $\sigma[M]$ is called {\it pure} if any finitely presented module in $\sigma[M]$ is projective with respect to this sequence and a module $N \in \sigma[M]$ is called {\it flat} in $\sigma[M]$ if any short exact sequence 
$$\begin{CD}
0 @>>> K @>>> L @>>> N @>>> 0  
\end{CD}$$
in $\sigma[M]$ is pure. Finally $M$ is called {\it regular} if any module in $\sigma[M]$ is flat or equivalently if any short exact sequence in $\sigma[M]$ is pure.

\subsection{Relative regularity}
Let $A\subseteq B$ be an extension with additional module structure. Our first aim will be to characterise $A$ as a regular $B$-module.

\begin{prop}\label{RegularExtensionsModuleStructure} Let $A\subseteq B$ be an extension with additional module structure.
The following statments are equivalent:
\begin{enumerate}
  \item[(a)] $A$ is regular and finitely presented in $\sigma[{_BA}]$;
  \item[(b)] Every left $B$-stable ideal that is finitely generated as left $B$-module is a direct summand and $A$ is finitely presented in
$\sigma[{_BA}]$.
  \item[(c)] $A^B$ is von Neumann regular and $A$ is a generator in
$\sigma[{_BA}]$.
  \item[(d)] $A^B$ is von Neumann regular and $()^B=\HomX{B}{A}{-}: \sigma[{_BA}]
\rightarrow A^B-Mod$ is an equivalence of categories.
\end{enumerate}
In this case $A$ is a projective generator in $\sigma[{_BA}]$.
\end{prop}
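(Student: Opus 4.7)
The plan is to first translate $B$-modules into left ideals, identify $\EndX{B}{A}\cong A^B$, and then cycle through the implications using Wisbauer's theory of $\sigma[M]$. The preliminary observation is that because $A\subseteq B$ acts on itself via $\Psi|_A=L_a$, every $B$-submodule of $A$ is automatically closed under left multiplication and hence a left ideal of $A$, and conversely every $B$-stable left ideal is a $B$-submodule; in particular, finitely generated $B$-submodules of $A$ are exactly the finitely generated $B$-stable left ideals, turning (b) into (a) rewritten in ideal-theoretic language. Moreover $\phi:\EndX{B}{A}\to A^B$ is a ring isomorphism, with inverse $x\mapsto R_x$ (right multiplication, which is $B$-linear precisely when $x\in A^B$); this gives the dictionary between ``regular endomorphism ring'' and ``regular subring of invariants''.

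For (a)$\Leftrightarrow$(b), I would use that since $A$ is cyclic and finitely presented in $\sigma[{_BA}]$, purity of the short exact sequence $0\to N\to A\to A/N\to 0$ is equivalent to lifting $\mathrm{id}_A$ across $A\twoheadrightarrow A/N$, hence to splitting; the passage from the finitely generated case to full regularity in $\sigma[{_BA}]$ is the standard characterisation for cyclic finitely presented modules, e.g.\ \cite[34.8]{wisbauer}.

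For (a)$\Rightarrow$(d), the argument is that (a) makes $A$ projective in $\sigma[{_BA}]$ (splitting of every short exact sequence ending at $A$) and a self-generator (each submodule is pure-embedded, hence $A$-generated), so $A$ is a progenerator of $\sigma[{_BA}]$. The $\sigma$-Morita theorem then yields the equivalence $(-)^B\cong\HomX{B}{A}{-}:\sigma[{_BA}]\simeq A^B\text{-Mod}$, and regularity of $A^B\cong\EndX{B}{A}$ is the classical consequence that the endomorphism ring of a regular finitely presented projective module is regular. Implication (d)$\Rightarrow$(c) is immediate, since equivalences send generators to generators. For (c)$\Rightarrow$(a), the strategy is: given $A^B$ regular and $A$ a self-generator, any finitely generated $B$-submodule $N\subseteq A$ can be expressed as $N=A\cdot\{a_1,\dots,a_n\}$ with $a_i=\phi(f_i)\in A^B$ via the self-generator hypothesis and the identity $f(a)=a\cdot\phi(f)$ for $f\in\EndX{B}{A}$; regularity of $A^B$ then produces an idempotent $e\in A^B$ with $eA^B=\sum a_iA^B$, and the decomposition $A=Ae\oplus A(1-e)$ is by $B$-stable summands because $e,1-e\in A^B$, matching $Ae$ with $N$. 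Finite presentation of $A$ in $\sigma[{_BA}]$ is then recovered by transporting it from the trivially finitely presented $A^B$ across the equivalence.

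The concluding assertion that $A$ is a projective generator in $\sigma[{_BA}]$ follows at once from (d), where $A$ corresponds under the equivalence to the progenerator $A^B\in A^B\text{-Mod}$. The main obstacle I anticipate is the bookkeeping in (c)$\Rightarrow$(a): confirming that the lattice of finitely generated $B$-submodules of $A$ matches the lattice of finitely generated right ideals of $A^B$ in such a way that the idempotent decomposition in $A^B$ actually lifts to a $B$-module decomposition of $A$ cutting out the prescribed submodule $N$, without assuming any centrality of $A^B$ inside $A$; this requires careful use of the self-generator property and the $(B,A^B)$-bimodule structure of $A$.
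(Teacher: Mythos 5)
Your overall architecture is the paper's: identify $\EndX{B}{A}\cong A^B$ via $f\mapsto (1)f$ with inverse given by right multiplications, exploit that $_BA$ is cyclic, and run a cycle through Wisbauer's $\sigma[M]$-theory, with the substantive work concentrated in (c)$\Rightarrow$(a). Two steps, however, do not go through as written. First, the idempotent is on the wrong side. Writing $N=\sum_i Aa_i$ with $a_i\in A^B$ and choosing $e^2=e\in A^B$ with $eA^B=\sum_i a_iA^B$ does \emph{not} give $N=Ae$: from $a_i\in eA^B$ you only get $Aa_i=Aec_i$ with $c_i\in A^B$, and $Aec_i\subseteq Ae$ fails without commutativity of $A^B$ -- exactly the obstacle you flagged at the end. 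The repair is to use the other side: $N^B$ is a \emph{left} ideal of $A^B$ (it absorbs $A^B$ on the left, since $N$ is a left ideal of $A$ and the invariance condition is stable under left multiplication by invariants), so regularity gives $N^B=A^Be$ and then $N=AN^B=AA^Be=Ae$, which is what the paper does (there $I=U^B$, $I=A^Be$, $U=AI=Ae$).

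Second, in (c)$\Rightarrow$(a) you recover finite presentation of $A$ "by transporting it across the equivalence", but at that point of the argument the equivalence is not yet available: it is only established once $A$ is known to be a projective generator, and projectivity is precisely what is missing. The paper closes this circle differently: since $_BA$ is a cyclic self-generator and $\EndX{B}{A}\simeq A^B$ is regular, $A$ is faithfully flat over $A^B$, and \cite[18.5(2)]{wisbauer} then yields that $A$ is self-projective, hence projective in $\sigma[{_BA}]$; a cyclic projective module in $\sigma[{_BA}]$ is automatically finitely presented there. You need some such argument (or an explicit proof that the kernel of $B\twoheadrightarrow A$ is finitely generated) before any Morita-type transport is legitimate. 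A smaller point: in (a)$\Leftrightarrow$(b) the finitely presented module to test purity against is $A/N$ (finitely presented because $A$ is and $N$ is finitely generated), not $A$ itself; lifting $\mathrm{id}_{A/N}$ is what splits the sequence. With these corrections your plan coincides with the paper's proof.
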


\begin{proof}
 $(a)\Leftrightarrow (b)$ follows from \cite[37.4]{wisbauer}

$(a)\Rightarrow (c)$ Since $A$ is finitely presented and regular in $\sigma[{_BA}]$, it is projective in $\sigma[{_BA}]$.  By 
As $A$ is a cyclic $B$-module,  by \cite[37.8]{wisbauer}, $A$ is a (finitely generated) projective generator in $\sigma[{_BA}]$ and $A^B$ is regular.

$(c)\Leftrightarrow (d)$ is clear.

$(c) \Rightarrow (b)$ Since $_BA$ is cyclic and a generator in $\sigma[{_BA}]$ and since $A^B\simeq \EndX{B}{A}$ is regular and thus $A$ is a faithfully flat $A^B$-module, we have by \cite[18.5(2)]{wisbauer} that $A$ is self-projective (and hence projective in $\sigma[_BA]$). This implies that $A$ is also finitely presented in $\sigma[{_BA}]$. If $U$ is a finitely generated $B$-stable left ideal of $A$, then $U=AI$ for $I=U^B$. $U$ being finitely generated as $B$-module, implies $I$ being finitely generated as right  ideal of $A^B$. Thus $I=A^Be$ for some idempotent $e$ and $U=AI=Ae$ is a direct summand of $A$, i.e. $A$ is regular by \cite[37.4]{wisbauer}.

\end{proof}


\subsection{Relative biregularity }
If $A\subseteq B$ is an extension  with additional modules structure $\Psi:B\rightarrow \EndX{k}{A}$,  we might identify $B$ with its image in $\EndX{k}{A}$ seeing it as an extension of the subalgebra generated by left multiplications of $A$. In order to study the two-sided $B$-stable ideals we might enlarge $B$ by considering $B'=\langle B \cup M(A) \rangle\subseteq \EndX{k}{A}$.
Note that all $B$-submodules of $A$ are two-sided  and  $A^B \subseteq Z(A)$ if $M(A)\subseteq B \subseteq \EndX{k}{A}$.
A $B$-stable ideal $I$ is called prime if  $JK \subseteq I$ implies $J\subseteq I$ or $K\subseteq I$ for any $B$-stable ideals $J$ and $K$.
$I$ is semiprime if it is the intersection of prime $B$-stable ideals. $A$ is $B$-semiprime if $0$ is a prime as $B$-stable ideal or equivalently $A$ does not contain any non-zero nilpotent $B$-stable ideal (see \cite[2.3]{Lomp05}). If a cyclic  $B$-stable ideal  $B\cdot a$ is a direct summand of $A$, then there exists an idempotent $e \in A^B$ with $B\cdot a = B\cdot e = Ae$ and $A = Ae \oplus A(1-e)$.
A $k$-algebra $A$ is called {\it $B$-biregular} if every cyclic $B$-stable ideal is a direct summand of $A$.
In particular Proposition \ref{RegularExtensionsModuleStructure} applies to get a characterisation of $B$-biregular algebras $A$ in case $A$ is finitely presented in $\sigma[_BA]$, namely that $A$ is $B$-biregular if and only if $A^B$ is a von Neumann regular ring with $(-)^B:\sigma[_BA]\rightarrow A^B$-Mod being an equivalence.

\subsection{Properties of relative biregular algebras}
In the next two subsections, we intend to characterise $B$-biregular algebras $A$ without assuming that $A$ is finitely presented in $\sAB$.

\begin{prop}\label{PropertiesBiregular}
Let $M(A)\subseteq B \subseteq \EndX{k}{A}$. Suppose that $A$ is $B$-biregular. Then
\begin{enumerate}
\item $A^B$ is von Neumann regular and $A$ is $B$-semiprime.
\item $A$ is a $A^B$-Ideal Algebra, i.e. the map $ I \mapsto IA $ is a bijection between the ideals $I$ of $A^B$ and the $B$-stable ideals of $A$, whose  inverse is given by $N \mapsto \mathrm{Ann}_{A^B}(A/N) \simeq \HomX{B}{A/N}{A}$.
\item Every finitely generated $B$-stable ideal of $A$ is cyclic and is generated by some central idempotent in $A^B$.
\item For any $B$-stable ideal $I$ of $A$, also $A/I$ is $B/I$-biregular.
\item Every $B$-stable ideal of $A$ is idempotent and equals the intersection of maximal $B$-stable ideals.
\item Every prime $B$-stable ideal is maximal.
\end{enumerate}
\end{prop}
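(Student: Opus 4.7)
\emph{Proof plan.} I will take the six assertions in the order $(3), (1), (2)$, then $(4)$--$(6)$, each feeding the later ones. The key tool is the remark preceding the proposition: every cyclic $B$-stable ideal has the form $Ae$ for a central idempotent $e\in A^B$. Part $(3)$ follows at once: for cyclic $B$-stable ideals $Ae_1, Ae_2$, the element $e := e_1+e_2-e_1e_2$ is again a central idempotent in $A^B$ with $Ae = Ae_1+Ae_2$, and induction on the number of generators yields the claim for any finitely generated $B$-stable ideal.

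Part $(1)$ is the technical heart. Given $x\in A^B$, centrality gives $Bx = Ax$, and biregularity yields $Ax = Ae$ for some central idempotent $e\in A^B$. From $e\in Ax$ and the centrality of $x$ one finds $c'\in A$ with $c'x = xc' = e$, and then $c'e\in Ae$ is a two-sided inverse of $x$ in the corner ring $Ae$. This forces
\[
\mathrm{Ann}_A(x) = \{a\in A : ax = 0\} = A(1-e).
\]
Now take $r = bx \in A^B \cap Ax$. The invariance of $r$ together with that of $x$ yields $((b'\cdot b)-(b'\cdot 1)b)x = 0$ for every $b'\in B$, and multiplying by $e$ gives $(b'\cdot b)e = (b'\cdot 1)(be)$. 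Since $e\in A^B$, the left side equals $b'\cdot(be)$, so $be\in A^B$. Hence $r = x(be)\in xA^B$, whence $A^B\cap Ax\subseteq xA^B$; combined with the elementary identity $A^B\cap Ae = eA^B$ this gives $xA^B = eA^B$, so $A^B$ is regular. The $B$-semiprimeness is easy: a nilpotent $B$-stable ideal $N$ contains, for each $n\in N$, a central idempotent $e$ with $Bn = Ae\subseteq N$, and nilpotency forces $e = 0$ and thus $n = 0$.

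For $(2)$, any $B$-stable $N$ satisfies $N = (N\cap A^B)A$ because for each $n\in N$ the idempotent $e$ with $Bn = Ae$ lies in $N\cap A^B$. Conversely, if $r\in IA\cap A^B$ then $r$ involves only finitely many elements $y_1,\ldots,y_k\in I$; by $(1)$ the finitely generated ideal $I_0 = \sum_j y_j A^B$ is principal with idempotent generator $g$, so $I_0 A = gA$ and $r\in gA\cap A^B = gA^B = I_0\subseteq I$. Parts $(4)$--$(6)$ follow formally: $(4)$ by pushing the decomposition $A = Ae\oplus A(1-e)$ to $A/I$, so every cyclic $B$-stable ideal of $A/I$ is a direct summand; $(5)$ from $n\in Ae = Ae\cdot e\subseteq N^2$ for idempotency and, for the intersection statement, by transporting through $(2)$ the fact that the commutative regular ring $A^B$ has zero Jacobson radical; and $(6)$ from the observation that a prime $B$-stable $P$ corresponds via $(2)$ to an ideal $\mathfrak p = P\cap A^B$ which is prime in $A^B$ (since $(Ay)(Az) = Ayz\subseteq P$ for $y,z\in A^B$), and in a commutative regular ring every prime is maximal.

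The main obstacle is $(1)$: specifically, identifying $\mathrm{Ann}_A(x) = A(1-e)$ (which rests on $x$ being a unit in the corner $Ae$) and then leveraging it to upgrade the a priori non-invariant element $b\in A$ to the invariant $be\in A^B$. Once $A^B$ is known to be regular, the remainder is bookkeeping on the central-idempotent structure provided by biregularity.
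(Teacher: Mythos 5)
Your proof is correct, but it follows a genuinely more self-contained route than the paper's. The paper leans on general module-theoretic machinery from Wisbauer's book: for (1) it notes that every $f\in\EndX{B}{A}$ has image $Ae$ and kernel $A(1-e)$ and then cites the criterion that an endomorphism ring with all images and kernels direct summands is regular; for (2) it cites intern-projectivity of self-generators over regular endomorphism rings; for (5) it argues that $A$ has no small $B$-submodules, so $\Rad{_BA}=0$; and for (6) it shows directly that a $B$-prime biregular algebra is $B$-simple (applied to $A/P$). You instead do everything by hand at the level of elements and central idempotents: your key computation $\mathrm{Ann}_A(x)=A(1-e)$ is exactly the paper's $\Ker{f}=A(1-e)$ in element form, but you then exhibit $e=xy$ with $y\in A^B$ and conclude $x=xyx$ rather than invoking the endomorphism-ring criterion, and the step showing $be\in A^B$ (upgrading the non-invariant coefficient $b$ to the invariant $be$) is the honest content hidden in the citation. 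For (2), (5) and (6) you transfer everything through the inclusion-preserving bijection with ideals of the commutative regular ring $A^B$ (semiprimitivity of commutative regular rings for (5), ``prime implies maximal'' for (6)); this works because the correspondence respects intersections and order, which is immediate from your identity $IA\cap A^B=I$, though you might state that explicitly. The paper's approach buys brevity; yours buys independence from \cite{wisbauer96} and makes visible where biregularity is actually used. Both are sound.
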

 
\begin{proof}
(1) Let  $f\in \EndX{B}{A}$, then  $(A)f=B(1)f$ is a direct sumand in $A$ by hypothesis, i.e. $(A)f=Ae$ with $e^2=e\in A^B\subseteq Z(A)$. Since  $A(1-e)\subseteq \Ker{f}\subseteq l.ann((A)f)=A(1-e)$ also the kernel of $f$ is a  direct  summand,. Hence by  \cite[7.6]{wisbauer96}, $\EndX{B}{A}$ and thus $A^B$ is regular. Since no cyclic $B$-stable ideal is nilpotent, $A$ is $B$-semiprime.

(2) $A$ generates all cyclic $B$-stable ideals, i.e. $_BA$ is a self-generator and since  $A^B$ is regular by (1),   $_BA$ is intern-projective by  \cite[5.6]{wisbauer96}. Since $A$ is a cyclic $B$-module, the claim then follows by \cite[5.9]{wisbauer96}.

(3) Let $Ae$ and $Af$ be cyclic  $B$-stable ideals with idempotents $e,f \in A^B$. Then  $Ae + Af = A(e+f-ef) = A(e\uplus f)$, where $\uplus$ is the addition in the boolean ring of idempotents $B(A^B)$.

(4) By (2), every $B$-stable ideal $I$ can be written as $I=JA$ with $J$ ideal in $Z:=A^B$.
Hence the canonical projection $A=A\otimes_Z Z \rightarrow A/I \simeq A\otimes Z/J$ can be understood as the tensoring of the canonical projection of $Z\rightarrow Z/J$ by  $A \otimes_Z -$, which respects direct sums.

(5) For every cyclic $B$-stable ideal  $B\cdot x = Ae$ we have $(Ae)^2=A^2e^2=Ae$ . Hence $B\cdot x$ and thus any $B$-stable ideal is idempotent. Since there are no small $B$-submodules in $A$, we have $\Rad{_BA}=0$ and $0$ is the intersection of maximal $B$-stable ideals. By (4) we can use this argument to each $A/I$. 

(6) Suppose $A$ is $B$-prime and $B$-biregular. Let $0\neq I=Ae$ be a cyclic $B$-stable ideal with idempotent $e$.
As $A(1-e)$ is a $B$-stable ideal  with  $A(1-e)I = 0$, we have  $A(1-e)=0$, i.e. $I=A$ and $A$ is $B$-simple.
\end{proof}

\subsection{Characterisation of relative biregularity}
The next Proposition characterises biregular extensions $A \subseteq M(A)\subseteq B \subseteq \EndX{k}{A}$. Denote by $\mathrm{Max}(A^B)$ the spectrum of maximal ideals of $A^B$ and by $A_m$ the localisation of $A$ by a maximal ideal $m$ of $A^B$. Note that if $A^B$ is regular, then $A_m=A/mA$ by \cite[17.7]{wisbauer96} and in particular since $mA$ is $B$-stable, we might consider $B\subseteq \EndX{k}{A/mA} = \EndX{k}{A_m}$. We say that $A$ is $B$-simple if $0$ and $A$ are the only $B$-stable ideals of $A$.

\begin{thm}\label{CharacterisationBiregularity}
The following statements are equivalent for an extension $M(A)\subseteq B \subseteq \EndX{k}{A}$.
\begin{enumerate}
\item[(a)] $A$ is $B$-biregular;
\item[(b)] $A^B$ is regular and every maximal $B$-stable ideal $M$ of $A$ is of the form $M=AM^B$.
\item[(c)] $A^B$ is regular and $A_m$ is $B$-simple for all $m\in \mathrm{Max}(A^B)$.
\end{enumerate}
\end{thm}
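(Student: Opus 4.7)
The plan is to close the cycle (a)$\Rightarrow$(b)$\Rightarrow$(c)$\Rightarrow$(a); the first two implications are essentially corollaries of Proposition \ref{PropertiesBiregular}, and the last carries the real content. For (a)$\Rightarrow$(b) I would combine parts (1) and (2) of Proposition \ref{PropertiesBiregular}: part (1) gives regularity of $A^B$, while part (2) provides an inclusion-preserving bijection $I\leftrightarrow IA$ between ideals of $A^B$ and $B$-stable ideals of $A$, with inverse $N\mapsto N^B$. A maximal $B$-stable ideal $M$ therefore corresponds to a maximal proper ideal of $A^B$, namely $M^B$, so $M=AM^B$.

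For (b)$\Rightarrow$(c) I would first record that regularity of $A^B$ alone forces $mA\ne A$ for every maximal $m$: any presentation $1=\sum m_ia_i$ can be rewritten, using regularity, as $m_i=e_im_i$ with an idempotent $e_i\in m$, and then the join $e=e_1\vee\cdots\vee e_n\in m$ satisfies $e\cdot 1=1$, yielding $1\in m$, a contradiction. If $mA\subsetneq N\subsetneq A$ were a $B$-stable ideal, I would extend $N$ by Zorn to a maximal $B$-stable ideal $M$; by (b) $M=AM^B$, and since $m\subseteq M^B\subsetneq A^B$ with $m$ maximal, $M^B=m$ and thus $M=mA$, contradicting $N\supsetneq mA$. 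Hence $A_m$ has only trivial $B$-stable ideals.

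For (c)$\Rightarrow$(a) I would proceed in two steps: (i) show that every $B$-stable ideal $N$ satisfies $N=N^BA$, and (ii) deduce that every cyclic $B$-stable ideal is generated by a central idempotent. Step (ii) is routine once (i) is available: write $a=\sum_{j=1}^n c_ja_j$ with $c_j\in N^B\subseteq A^B$; the defining identity $b\cdot(a_jc_j)=(b\cdot a_j)c_j$ for $c_j\in A^B$, combined with centrality of the $c_j$, gives $Ba\subseteq \sum c_jA=(c_1,\dots,c_n)A$, the reverse inclusion being clear since each $c_j\in Ba$. Since $A^B$ is commutative von Neumann regular, the finitely generated ideal $(c_1,\dots,c_n)$ is generated by a single idempotent $e\in A^B$, and $Ba=eA$ is a direct summand.

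The main obstacle is Step (i), which I would handle by a Nakayama-style localization argument. Since $A^B$ is commutative von Neumann regular, $A$ is flat over $A^B$ and the localization $(A/N^BA)_m$ at a maximal $m$ of $A^B$ equals $A/((N^B+m)A)$, which vanishes whenever $N^B\not\subseteq m$. If some $n\in N\setminus N^BA$ existed, I would pick $m$ with $\bar n\ne 0$ in $(A/N^BA)_m$, forcing both $N^B\subseteq m$ and $n\notin mA$; then $N\not\subseteq mA$ and, since $mA\ne A$ by the argument above, $B$-simplicity of $A_m$ yields $N+mA=A$, so $(A/N)_m=0$. Localization theory then produces $s\in A^B\setminus m$ with $s\in N$, i.e.\ $s\in N^B\setminus m$, contradicting $N^B\subseteq m$.
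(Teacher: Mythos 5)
Your proof is correct and follows essentially the same route as the paper: (a)$\Rightarrow$(b) via Proposition \ref{PropertiesBiregular}, (b)$\Rightarrow$(c) by passing to a maximal $B$-stable ideal above $mA$, and (c)$\Rightarrow$(a) by localizing at maximal ideals of $A^B$ to establish $N=N^BA$ for every $B$-stable ideal $N$. You additionally spell out two points the paper leaves implicit, namely that $mA\neq A$ when $A^B$ is regular and the final passage from the self-generator property $N=N^BA$ to cyclic $B$-stable ideals being generated by idempotents; both of these details are sound.
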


\begin{proof}
$(a) \Rightarrow (b)$ the properties $(i-iii)$ follow from Proposition \ref{PropertiesBiregular} and $(iv)$ follows from the fact if $A$ is $B$-biregular then for any $x\in A: l.ann_A(Bx)=A(1-e)$ with $e^2=e \in A^B$ is already a $B$-ideal.

$(b)\Rightarrow (c)$:
Let $m$ be a maximal ideal of $A^B$ and let $M$ be a maximal $B$-stable ideal containing $mA\subseteq M$. Since $M=M^BA$ we have 
$$m\subseteq (mA)^B \subseteq M^B$$ which implies $M^B=m$ since $M\neq A$. Thus $mA=M$ and $A/M=A/mA=A_m$ is $B$-simple.

$(c)\Rightarrow (a)$ Let $I$ be any $B$-stable ideal $I$ of $A$. Then $I^BA \subseteq I$ and $$(I^BA)_m = (I\cap A^B)_m A_m = (I_m \cap A^B_m)A_m.$$
If $I_m=A_m$ then $I^B_m = I_m\cap A^B_m = A^B_m$ and hence $(I^BA)_m=I_m$. If $I_m \neq A_m$, then $I_m=0_m$ and therefore $I^B_m = 0_m$, i.e. $(IA^B)_m=I_m$. Since this holds for any maximal ideal $m$ of $A^B$, we get $I=I^BA$ which shows that $A$ is a self-generator as $B$-module.

\end{proof}

\subsection{Regular subring of invariants}
Assume again that $A\subseteq B$ is any extension with additional module structure. 
In order to determine when the subring of invariants $A^B$ is regular, we need first to borrow another notion from module theory.

\begin{defn} A left $R$-module $M$ is called semi-projective, if every
diagram:
$$\begin{CD}
@.M\\
@.@VgVV\\
M@>f>> N @>>> 0
\end{CD}$$
with $N\subseteq M$ can be completed by an endomorphism $h \in
S:=\EndX{R}{M}$ such that $hf=g$.
As it is easily seen:
$M$ is semi-projective if and only if $\HomX{R}{M}{Mf}=Sf$ for all $f\in S$.
\end{defn}

Hence $A$ is  semi-projective as left $B$-module if $\forall x\in A^B: (Ax)^B = A^Bx$.

\begin{prop}\label{RegularRelativeFixRing} Let $A\subseteq B$ be an extension with additional module structure. Then $A^B$ is  von Neumann regular if and only if
\begin{enumerate}
 \item $A$ is semi-projective as left $B$-module and
\item every cyclic left ideal generated by an $B$-invariant element $x\in A^B$ is a direct summand of $A$ as $B$-module.
\end{enumerate}

\end{prop}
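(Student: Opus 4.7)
The plan is to work throughout via the canonical isomorphism $A^B\cong S:=\EndX{B}{A}$, $f\mapsto (1)f$, recalled in the introduction. Since $A$ is cyclic as a $B$-module and the elements of $A\subseteq B$ act as left multiplications, every $f\in S$ has the form $(a)f=a\cdot (1)f$; that is, $f$ is right multiplication $R_y$ by $y=(1)f\in A^B$, and composition of endomorphisms corresponds to multiplication in $A^B$. Under this identification, the semi-projectivity condition $\HomX{B}{A}{Af}=Sf$ translates (taking $f=R_x$) to $(Ax)^B=A^B x$ for every $x\in A^B$, while condition (2) says exactly that $Ax$ is a $B$-module direct summand of $A$ for every $x\in A^B$.

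For the implication ``$A^B$ regular $\Rightarrow$ (1) and (2)'', start with $x\in A^B$ and choose $y\in A^B$ with $x=xyx$. Then $e:=yx\in A^B$ is idempotent and satisfies $xe=x$, so $Ae=Ax$. Since right multiplication by $e$ (and by $1-e$) is $B$-linear, this already gives the splitting $A=Ae\oplus A(1-e)$ of condition (2). For (1), take any $z\in (Ax)^B$ and write $z=ax$; then $ze=a(xyx)=ax=z$, hence $z=z(yx)=(zy)x$ lies in $A^B x$, as $A^B$ is a subring. The reverse inclusion $A^B x\subseteq (Ax)^B$ is immediate.

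For the converse, assume (1) and (2) and let $x\in A^B$. By (2), $A=Ax\oplus K$ as $B$-modules, and the associated projection $p:A\to A$ is a $B$-linear endomorphism with image $Ax$, so $p\in \HomX{B}{A}{Ax}$. By (1), there exists $s\in S$ with $p=sR_x$; writing $s=R_z$ with $z=(1)s\in A^B$ gives $p=R_{zx}$. Since $p$ is the identity on $Ax$ and $x\in Ax$, evaluating at $x$ yields $x=(x)p=xzx$, which proves $A^B$ is von Neumann regular.

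The main obstacle is not a calculation but the bookkeeping of the identification $S\cong A^B$ together with the opposite-side convention for endomorphisms, and the resulting translation of semi-projectivity into the concrete statement $(Ax)^B=A^Bx$. Once this is in place, condition (2) delivers a $B$-linear idempotent with image $Ax$ but not a priori of the form $R_w$ for any specific $w\in A^B x$; it is precisely semi-projectivity that forces this idempotent to factor through $R_x$, thereby extracting the quasi-inverse $z$ of $x$ inside $A^B$.
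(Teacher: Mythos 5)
Your proof is correct and follows essentially the same route as the paper: both directions turn on the translation of semi-projectivity into $(Ax)^B=A^Bx$ and on the splitting $A=Ax\oplus K$ as $B$-modules. The only cosmetic difference is in the final step of the converse, where the paper applies $(-)^B$ to the decomposition to get $A^B=A^Bx\oplus K^B$ and invokes von Neumann's direct-summand characterisation, while you extract the quasi-inverse $z$ with $x=xzx$ explicitly from the projection; likewise your forward direction reproves by hand what the paper cites from \cite[5.9]{wisbauer96}.
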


\begin{proof}
If $A^B\simeq \EndX{B}{A}$ is regular, then $_BA$ is  semi-projective by  \cite[5.9]{wisbauer96}. Furthermore since the images of $B$-linear maps are direct summands and are precisely the cyclic $B$-stable left ideals generated by a $B$-invariant element we are done.

On the other hand assume that $_BA$ is semi-projective.
Let $0\neq x \in A^B$ then $B\cdot x = Ax$ is a direct summand of $A$ as left $B$-module by hypothesis.
Thus $A=Ax \oplus I$ as left $B$-modules. But then
$$A^B = (1)(\HomX{B}{A}{Ax}\oplus \HomX{B}{A}{I}) = (Ax)^B \oplus I^B = A^Bx \oplus I^B.$$
Hence every cyclic left ideal of $A^B$ is a direct summand, i.e. $A^B$ is von Neumann regular.
\end{proof}

\subsection{Large subring of invariants}

If  $A$ is finitely presented and regular in $\sAB$, then by Propositon \ref{RegularExtensionsModuleStructure} it is a projective generator.in $\sAB$. Weakening the generator conditon J.Zelmanowitz called a left $R$-module $M$ {\it retractable} if $\HomX{R}{M}{N}\neq 0$ for all non-zero submodules $N\subseteq M$. 
For a module algebra extension $A\subseteq B$ we say that $A^B$ is {\it large} in $A$ if $I\cap A^B \neq 0$ for all $B$-stable left ideals of $A$ or eqivalently if $A$ is a retractable $B$-module. A classical theorem of Bergmann and Isaacs says that if  finite group $G$ acts on an algebra $A$ such that $A$ is $G$-semiprime and has no $|G|$-torsion, then $R^G$ is large in $R$.

A purely module theoretical result by J.Zelmanowitz from \cite{Zelmanowitz} says now in our language:

\begin{lem}\label{Zelmanowitz_polyform} Let $A$ be projective in $\sAB$ and $A^B$ large in $A$, then
\begin{enumerate}
\item If $A^B$ is left self-injective, then $A$ is a self-injective left $B$-module.
\item If $A^B$ is von Neumann regular, then $A$ is a non-singular in $\sAB$, i.e. if $K \subseteq L$ is an essential extension in $\sAB$, then $\HomX{B}{L/K}{A}=0$.
\end{enumerate}
\end{lem}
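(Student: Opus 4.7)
My plan is to recognize this lemma as a direct specialization of Zelmanowitz's theorems on retractable modules, obtained by setting $M = A$ as a left $B$-module with $S := \EndX{B}{A} \cong A^B$, where each $c \in A^B$ corresponds to the endomorphism $(-)c : A \to A$. Under this identification, the functor $T := \HomX{B}{A}{-} \cong (-)^B : \sAB \to S\text{-Mod}$ is exact (because $A$ is projective in $\sAB$) and faithful on submodules of $A$-generated modules (because $A^B$ being large in $A$ is equivalent to $A$ being retractable as a $B$-module). Both parts are then Baer-type arguments that shuttle between $\sAB$ and $A^B$-Mod using these two properties.

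For part (1), I would verify self-injectivity of $A$ as a $B$-module by extending maps from $B$-submodules. Given $I \subseteq A$ and a $B$-linear map $f : I \to A$, applying $T$ produces a left $A^B$-module map $T(f) : I^B \to A^B$ where $I^B$ sits as a left ideal of $A^B$. Injectivity of $A^B$ over itself together with Baer's criterion yields an extension $\tilde f : A^B \to A^B$, which is right multiplication by some $c \in A^B$ and corresponds to an endomorphism $\hat f \in S$ of $A$. To see $\hat f|_I = f$, set $h := \hat f|_I - f$: one has $T(h) = 0$ by construction, and if $h \neq 0$ then retractability produces a nonzero $\psi : A \to \mathrm{Im}(h)$ which projectivity lifts through the surjection $I \twoheadrightarrow \mathrm{Im}(h)$ induced by $h$, giving $\tilde\psi : A \to I$ with $T(h)(\tilde\psi) = h \circ \tilde\psi = \psi \neq 0$, a contradiction.

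For part (2), I would argue contrapositively: assume $K \subseteq L$ is essential in $\sAB$ and $g : L/K \to A$ is nonzero. Retractability yields a nonzero $\phi : A \to \mathrm{Im}(g)$, and projectivity lifts $\phi$ first through $g$ and then through the projection $\pi : L \twoheadrightarrow L/K$ to a map $\hat\phi : A \to L$ whose value at $1$ I call $\ell$; set $a := \phi(1)$, which lies in $\mathrm{Im}(g)^B \subseteq A^B$. Then $V := \{v \in A \mid v \cdot \ell \in K\}$ is a $B$-submodule of $A$, one checks $V \cdot a = 0$, and essentiality of $K$ in $L$ (together with the observation that any nonzero $W \subseteq A$ either lies in $\mathrm{Ker}(\hat\phi) \subseteq V$ or satisfies $\hat\phi(W) \cap K \neq 0$) forces $V$ essential in $A$. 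To close the argument one transports this essentiality through $(-)^B$ to conclude that $\mathrm{Ann}^l_{A^B}(a)$ is an essential left ideal of $A^B$; since the regular ring $A^B$ has zero left singular ideal, this forces $a = 0$, contradicting $a \neq 0$.

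The main obstacle is this final essentiality-transfer in (2): showing that an essential $B$-submodule of $A$ descends via $(-)^B$ to an essential left ideal of $A^B$ simultaneously requires projectivity of $A$ in $\sAB$, retractability, and the fact that finitely generated left ideals of regular $A^B$ are generated by idempotents and hence direct summands. This is precisely the content of Zelmanowitz's argument in \cite{Zelmanowitz}, and the cleanest execution is to cite his theorem after verifying that the hypotheses match, rather than to reconstruct the Hom-tensor comparison from scratch.
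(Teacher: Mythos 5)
Your proposal is correct and follows essentially the same route as the paper: both reduce the lemma to Zelmanowitz's Proposition (p.\ 567) and Corollary (p.\ 568) on retractable modules by identifying $S=\EndX{B}{A}\simeq A^B$ and observing that projectivity of $A$ in $\sAB$ together with largeness of $A^B$ (i.e.\ retractability) yields the full retractability his theorems require. Your additional Baer-type reconstructions of the two parts are sound sketches of Zelmanowitz's own arguments, and you correctly isolate the essentiality-transfer step as the point where all three hypotheses interact, deferring it to the citation just as the paper does.
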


\begin{proof}
Zelmanowitz calls  a left $R$-module $M$ {\it fully retractable} if $\HomX{R}{M}{N}g \neq 0$ for any $0\neq g\in \HomX{R}{N}{M}$ and submodule $N \subseteq M$. It is easy to see that self-projective retractable modules are fully retractable. Zelmanowitz proves  in \cite[Proposition on page 567]{Zelmanowitz} that $M$ is self-injective if $M$ is fully retractable and left $\EndX{R}{M}$ self-injective. Property (2) follows from \cite[Corollary on page 568]{Zelmanowitz}.
\end{proof}

Note that a module $M$ is non-singular in $\sigma[M]$ if and only if it is ``polyform'' in the sense of J.Zelmanowitz (see \cite{wisbauer96}).

\subsection{}\label{CorollaryZelmanowitz}
As a consequence we have that if $A$ is projective in $\sAB$ and $A^B$ large in $A$, then $A^B$ is regular and left self-injective if and only if  $A$ is injective and non-singular in $\sAB$, because the endomorphism ring of any self-injective polyform module is self-injective and regular by \cite[11.1]{wisbauer96}.

\section{Relative semisimple extensions}

Let $A\subseteq B$ be an extension of $k$-algebras. An element $c=\sum_i c_i \otimes c^i \in B\otimes_A B$ which is $B$-centralising, i.e.  $bc=cb$ for all $b\in B$ is called a {\it Casimir element} for $B$ over $A$ (see \cite{Schauenburg} for the terminology). We say that a Casimir element acts unitarily on an element $m$ of a left $B$-module $M$ if $\left(\sum_i c_ic^i\right) \cdot m = m$.

\begin{prop}\label{CasimirTraceOne}
Let $A\subseteq B$ be an extension with additional module structure and suppose that $B$ has a Casimir element over $A$ that acts unitarily on $A$, then  the following hold:
\begin{enumerate}
 \item[(1)] $c$ acts unitarily on any module in $\sigma [_BA]$.
 \item[(2)] The $k$-linear map
$ \HomX{A}{M}{N} \rightarrow \HomX{B}{M}{N}$ with $f  \mapsto \tilde{f}:[m \mapsto \sum c_i \cdot f(c^i \cdot m) ]$
splits the embedding $\HomX{B}{M}{N} \subseteq \HomX{A}{M}{N}$ for any $N,M\in \sigma[_BA]$.
\end{enumerate}
\end{prop}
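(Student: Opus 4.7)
The plan is to extract the central element $z := \sum_i c_i c^i \in B$ from the Casimir tensor and use the Casimir identity $bc=cb$ to build the averaging operator. For (1), I would first observe that if $\mu : B \otimes_A B \to B$ denotes the multiplication map, then $\mu(bc) = b\,\mu(c) = bz$ and $\mu(cb) = \mu(c)\,b = zb$ for every $b \in B$, so the Casimir condition forces $z$ to lie in the centre of $B$. Multiplication by $z$ is therefore a $B$-module endomorphism $\phi_z : M \to M$ on every $B$-module $M$. The class $\{M : \phi_z = \mathrm{id}_M\}$ is closed under submodules, factor modules and direct sums (being the equalizer of the two $B$-linear natural transformations $\phi_z$ and $\mathrm{id}$), so the hypothesis that it contains $A$ forces it to contain all of $\sAB$.

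For (2), the first task is to make the formula $\tilde f(m) := \sum_i c_i \cdot f(c^i \cdot m)$ well-defined for $f \in \HomX{A}{M}{N}$ and $m \in M$. Fixing $m$ and $f$, the map $B \times B \to N$ sending $(x,y) \mapsto x\cdot f(y\cdot m)$ is $A$-balanced, because for $a \in A \subseteq B$ we have $xa \cdot f(y\cdot m) = x\cdot(a\cdot f(y\cdot m)) = x\cdot f(ay\cdot m)$ by the $A$-linearity of $f$ combined with the fact that the $A$-action on $M$ and $N$ is the restriction of the $B$-action. It therefore descends to a $k$-linear map $\psi : B \otimes_A B \to N$, and $\tilde f(m) := \psi(c)$ is well-defined.

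Next I would verify $B$-linearity of $\tilde f$ by applying $\psi$ to both sides of $bc = cb$: the left side gives $\sum (bc_i) \cdot f(c^i \cdot m) = b\,\tilde f(m)$, while the right side gives $\sum c_i \cdot f(c^i b \cdot m) = \tilde f(b\cdot m)$, so the two agree. Finally, if $f$ is already $B$-linear then $\tilde f(m) = \sum c_i \cdot c^i \cdot f(m) = z\cdot f(m) = f(m)$ by part (1) applied to $N \in \sAB$; thus $f \mapsto \tilde f$ is a $k$-linear retraction of the inclusion $\HomX{B}{M}{N} \subseteq \HomX{A}{M}{N}$. The main delicacy is the well-definedness step, which crucially uses $A$-linearity (not just $k$-linearity) of $f$; once $\psi$ is set up, the remaining claims are essentially transcriptions of the Casimir identity and of part (1).
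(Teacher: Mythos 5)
Your proof is correct and follows the same basic strategy as the paper's: reduce part (1) to the action on $A$ via subquotients of direct sums of copies of $A$, and obtain both the $B$-linearity of $\tilde f$ and the retraction property by pushing the Casimir identity $bc=cb$ through suitable $A$-balanced maps. Two points of difference are worth recording. For (1), the paper does not isolate the centrality of $z=\mu(c)$ in $B$; it instead applies $\alpha\circ\mu$ to the identity $ac=ca$ (for $a\in A$) to deduce $\gamma\cdot a=a$ from $\gamma\cdot 1=1$, and then computes elementwise on a subquotient $M\subseteq A^{(\Lambda)}/I$. Your naturality-and-closure formulation is equivalent and arguably cleaner, but note that in the paper's applications the unitarity hypothesis is only verified on the element $1$, so before your closure argument starts you need the (one-line) observation that centrality of $z$ together with cyclicity of $_BA$ upgrades $z\cdot 1=1$ to $\phi_z=\mathrm{id}_A$; this is exactly the content of the paper's computation $(*)$. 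For (2), you supply the well-definedness of $\tilde f$ (independence of the chosen representative of $c$ in $B\otimes_A B$) via the $A$-balanced map $\psi_m$, whereas the paper dismisses the $B$-linearity of $\tilde f$ as obvious; this is a genuine improvement in completeness rather than a different route, and the remaining verifications (that applying $\psi_m$ to $bc=cb$ gives $b\cdot\tilde f(m)=\tilde f(b\cdot m)$, and that $\tilde f=f$ for $B$-linear $f$ by part (1) applied to $N$) coincide with the paper's.
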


\begin{proof} Let $\gamma := \sum c_i c^i$ and  $\alpha : B \longrightarrow A$ with  $(b)\alpha=b\cdot 1$. 
Then  $\alpha$ is left $B$-linear and $(a)\alpha=a$ for any $a\in A$.
For all $a\in A$ we have $ac = \sum ac_i \otimes c^i = \sum c_i \otimes c^ia = ca$.
Then also $\left(\sum ac_ic^i\right)\alpha = \left(\sum c_ic^ia\right)\alpha$ holds. Thus 
$$(*)\:\:\: a = a (\gamma)\alpha
= \left( \sum ac_ic^i\right)\alpha
= \left( \sum c_ic^ia \right)\alpha
= \left(\sum c_ic^i\right) \cdot (a)\alpha = \gamma \cdot a.$$
(1) Let  $M \in \sigma[_BA]$. Then there exists a set $\Lambda$ and a $B$-submodule
$I \subseteq A^{(\Lambda)}$, such that $M$ is isomorphic to a $B$-submodule of $A^{(\Lambda)}/I$.
We identify $M$ with a submodule of $A^{(\Lambda)}/I$.
Let  $m \in M$, then there are elements  $a_\lambda \in A$ for $\lambda \in \Lambda$ such that $m = ( a_\lambda )_\Lambda + I$. Now it follows with  $(*)$:
$$\gamma \cdot m = \gamma \cdot [ ( a_\lambda )_\Lambda + I ] =
(\gamma \cdot a_\lambda)_\Lambda + I = (a_\lambda)_\Lambda + I = m.$$
(2) Obviously $\tilde{f}$ is $B$-linear for all $f:M\longrightarrow N$  since $c$ is a Casimir element. If $f$ was already $B$-linear, then using $(1)$ we get for all $m\in M$:
 $$\tilde{f}(m) = \sum c_i \cdot f(c^i \cdot m) = \left(\sum c_i c^i\right) \cdot f(m) = f(m),$$
 i.e. $\tilde{f}=f$ showing that the embedding splits.
\end{proof}
\subsection{}
$M$ is a {\it $(B,A)$-semisimple } $B$-module if any short exact sequence in $\sigma[{_BM}]$ that splits as left $A$-module, also splits as left $B$-module (see \cite[page 170]{wisbauer96}). Recall that Hirata and Sugano called a ring extension $A\subseteq B$ a semisimple extension if $B$ is  $(B,A)$-semisimple (see \cite{HirataSugano}). 

\begin{cor} \label{CasimirTraceOne3}
If $B$ has a Casimir element $c$ which acts unitarily on $A$,  then  $A$ is a $(B,A)$-semisimple $B$-module and for any $M\in \sigma[_BA]$ 
\begin{itemize}
 \item If $M$ is $N$-projective as $A$-module for $N\in \sigma[_BA]$, then $M$ is also $N$-projective as $B$-module.
\item If $M$ is $N$-injective as $A$-module for $N \in B$-Mod, then $M$ is also $N$-injective as $B$-module.
\end{itemize}
In particular $A$ is projective in $\sigma[_BA]$.
\end{cor}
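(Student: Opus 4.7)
The plan is to use the averaging map $\tilde{(-)}\colon \HomX{A}{M}{N}\to\HomX{B}{M}{N}$ from Proposition~\ref{CasimirTraceOne}(2) as a Reynolds-type operator on Hom-spaces of modules in $\sAB$. The crucial observation, already made at the end of the proof of Proposition~\ref{CasimirTraceOne}, is that whenever $f$ is already $B$-linear and its codomain lies in $\sAB$, one has $\tilde{f}=f$, since $\tilde{f}(m)=\sum c_i\cdot f(c^i\cdot m)=\sum c_ic^i\cdot f(m)=\gamma\cdot f(m)=f(m)$ by part (1). Thus $\tilde{(-)}$ is a retraction of the inclusion $\HomX{B}{M}{N}\subseteq\HomX{A}{M}{N}$ that is the identity on maps which are already $B$-linear.

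For the $(B,A)$-semisimplicity of $A$, I would take a short exact sequence $0\to K\to L\to M\to 0$ in $\sAB$ with projection $\pi\colon L\to M$, together with an $A$-linear section $s\colon M\to L$. Applying averaging produces $\tilde{s}\in\HomX{B}{M}{L}$, and since $\pi$ is $B$-linear, $\pi\tilde{s}(m)=\sum c_i\cdot\pi s(c^i\cdot m)=\sum c_ic^i\cdot m=\gamma\cdot m=m$ by Proposition~\ref{CasimirTraceOne}(1); hence $\tilde{s}$ is a $B$-linear section and the sequence splits in $B$-Mod.

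The two relative lifting/extension statements follow the same template. For projectivity, given a $B$-linear epimorphism $p\colon N\to N'$ with $N\in\sAB$ (so $N'\in\sAB$ as well) and a $B$-linear $f\colon M\to N'$, use the $N$-projectivity of $M$ over $A$ to pick an $A$-linear lift $g_0\colon M\to N$, and verify that $p\circ\tilde{g_0}=\widetilde{p\circ g_0}=\tilde{f}=f$, the last equality by the observation above. For injectivity, given a $B$-submodule $K\subseteq N$ and a $B$-linear $f\colon K\to M$, extend $f$ $A$-linearly to $g_0\colon N\to M$ and check that $\tilde{g_0}|_K=\tilde{f}=f$; the right-hand equality again uses that the codomain $M$ lies in $\sAB$, so that $\gamma$ acts as identity on the values of $f$.

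The concluding assertion follows by specialising the projectivity clause: as a left $A$-module $A$ is free, hence $N$-projective over $A$ for every $N\in\sAB$, so the clause promotes this to $N$-projectivity over $B$ for all such $N$, i.e.\ $A$ is projective in $\sAB$. There is no real obstacle in the argument; the only delicate point is to insist that the codomain of every already-$B$-linear map to which averaging is applied lies in $\sAB$, which is automatic in each of the three situations (in semisimplicity and projectivity because the ambient sequence lives in $\sAB$, in injectivity because the target $M$ itself is in $\sAB$).
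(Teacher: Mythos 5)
Your proposal is correct and follows essentially the same route as the paper: apply the averaging operator $f\mapsto\tilde f$ of Proposition \ref{CasimirTraceOne}(2) to an $A$-linear splitting/lift/extension and use part (1) (that $\gamma=\sum c_ic^i$ acts as the identity on modules in $\sigma[{_BA}]$) to check the averaged map still does the required job. The only cosmetic difference is that for $(B,A)$-semisimplicity the paper averages the $A$-linear retraction onto the submodule rather than the section of the quotient map, which is an immaterial variation.
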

 
\begin{proof}
Let  $\pi: M\longrightarrow N$ be a projection in $\sAB$ with $\pi(n)=n$. Then for any 
$n\in N$:
$$ \tilde{\pi}(n) = \sum c_i \cdot \pi(c^i \cdot n) = \left( \sum c_ic^i\right) \cdot \pi(n) = \pi(n) = n.$$
Thus $\tilde{\pi}$ splits the embedding of $N$ into $M$ as $B$-module.
In the same way one proves the statements (1). For (2) note that if $f:U\rightarrow M$ is $B$-linear, where $U$ is a $B$-submodule of $N$, then there exists an $A$-linear map $g:N\rightarrow M$ such that $g_{\mid_U} = f$. Set as before $\tilde{g}:N\rightarrow M$ which is $B$-linear. Then $\tilde{g}(u)= \left( \sum c_i c^i \right) \cdot f(u) =f(u)$ .
\end{proof}

 
\subsection{}
In \cite{HirataSugano} Hirata and Sugano called a ring extension $A\subseteq B$ {\it separable} if there exists a Casimir element $c=\sum_i c_i \otimes c^i$ such that $\sum_i c_ic^i = 1$.

\begin{cor}\label{CasimirCorollar} Let $A\subseteq B$ be an extension with additional module structure such that there exists a Casimir element in $B$ which acts unitarily on $A$, then 
\begin{enumerate}
 \item If $A$ is a semisimple artinian ring, then $A$ is semisimple $B$-module.
 \item If  $A$ is von Neumann regular and $_AB$ is finitely generated, then  
\begin{itemize}
\item $A$ is a regular module in $\sigma[_BA]$;
\item $A^B$ is a regular ring and
\item $(-)^B$ defines a Morita equivalence between $A^B$-Mod and $\sigma[_BA]$ .
\end{itemize}
\item If  $\sigma[_BA] = B$-Mod, then  $A\subseteq B$ is a semisimple extension.
\end{enumerate}
\end{cor}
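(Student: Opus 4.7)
The plan is to handle each item by combining the $(B,A)$-semisimplicity of $_BA$ established in Corollary \ref{CasimirTraceOne3} with the respective extra hypothesis.

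For (1) I would take an arbitrary $B$-submodule $U\subseteq A$. Since $A\subseteq B$ acts by left multiplication via $\Psi$, $U$ is simultaneously an $A$-submodule, and semisimplicity of $_AA$ supplies an $A$-linear splitting of $0\to U\to A\to A/U\to 0$; Corollary \ref{CasimirTraceOne3} then upgrades it to a $B$-linear splitting, making $U$ a direct summand of $_BA$. Since every $B$-submodule of $A$ splits off, $_BA$ is semisimple.

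For (2) the strategy is to verify condition (b) of Proposition \ref{RegularExtensionsModuleStructure}. Corollary \ref{CasimirTraceOne3} already gives that $_BA$ is projective in $\sAB$, and since $_BA$ is cyclic, any epimorphism $L\twoheadrightarrow A$ in $\sAB$ splits, so its kernel is a direct summand of $L$ and therefore finitely generated whenever $L$ is; this makes $A$ finitely presented in $\sAB$. The substantive step will be to show that every finitely generated $B$-stable left ideal $U=\sum_{i=1}^n B\cdot u_i$ of $A$ is a direct summand as $B$-module. Writing $B=\sum_{j=1}^m Ab_j$ by finite generation of $_AB$ and using that $\Psi(a)=L_a$ makes the $B$-action of elements of $A$ coincide with multiplication in $A$, I would rewrite $b\cdot u_i=\sum_j a_j(b_j\cdot u_i)$ so that $U=\sum_{i,j}A(b_j\cdot u_i)$ becomes a finitely generated left $A$-ideal. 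Von Neumann regularity of $A$ then produces $U=Ae$ for an idempotent $e\in A$ together with an $A$-linear splitting $A=Ae\oplus A(1-e)$, which Corollary \ref{CasimirTraceOne3} lifts to a $B$-linear splitting. With condition (b) of Proposition \ref{RegularExtensionsModuleStructure} in place, the equivalent conditions (a), (c), (d) deliver respectively that $A$ is regular in $\sAB$, that $A^B$ is von Neumann regular, and that $(-)^B$ is an equivalence between $\sAB$ and $A^B$-Mod.

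For (3) the assumption $\sAB=B$-Mod identifies the category in which Corollary \ref{CasimirTraceOne3} asserts $(B,A)$-semisimplicity of $_BA$ with all of $B$-Mod, so every short exact sequence of $B$-modules that splits as $A$-module also splits as $B$-module; this is precisely the Hirata--Sugano definition (cf.~\cite{HirataSugano}) of $A\subseteq B$ being a semisimple extension. The hard part will be the reduction in (2) from finite $B$-generation of $U$ to finite $A$-generation of $U$: that step hinges critically on both the hypothesis that $_AB$ is finitely generated and the compatibility $\Psi(a)=L_a$ of the $B$-action with multiplication in $A$.
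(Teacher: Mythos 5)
Your proof is correct and follows essentially the same route as the paper: reduce finitely generated $B$-stable ideals to finitely generated left $A$-ideals via $_AB=\sum_j Ab_j$ and $\Psi(a)=L_a$, split them $A$-linearly by regularity, upgrade to $B$-splittings by $(B,A)$-semisimplicity, and feed this into Proposition \ref{RegularExtensionsModuleStructure}. The only (harmless) deviations are that you deduce finite presentation of $A$ in $\sigma[{_BA}]$ from its projectivity and cyclicity rather than from $\Ker{\alpha}$ being a finitely generated $A$-module, and that you treat finitely generated ideals directly where the paper treats cyclic ones.
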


\begin{proof}
(1) Is clear since $A$ is $(B,A)$-semisimple.\\

(2) Since $_AB$ is finitely generated, $A$ is finitely presented in $\sigma[_BA]$.
 If  $B\cdot a$ is a cyclic $B$-submodule of  $A$, then by hypothesis $B\cdot a$  is also finitely generated as left $A$-module and hence a direct summand of $A$ as left $A$-module. Thus $B\cdot a$ is also a direct summand of $A$ as left $B$-module since $A$ is $(B,A)$-semisimple. By \ref{RegularExtensionsModuleStructure} $A$ is a regular module in $\sigma[_BA]$.
Also by \ref{RegularExtensionsModuleStructure}  we have that $\EndX{B}{A}\simeq A^B$  is regular and $A$ is a progenerator in $\sigma[_BA]$ with equivalence $\HomX{B}{A}{-}\simeq (-)^B: \sigma[_BA] \longrightarrow \EndX{B}{A}\simeq A^B.$

(3) If $A$ is a subgenerator in $B$-Mod, then $B\in\sigma[_BA]$ is itself $(B,A)$-semisimple.
\end{proof}

\section{Applications to Hopf algebra actions}
Let $H$ be a Hopf algebra over $k$ acting on an algebra $A$, i.e. $A$ is a left $H$-module algebra. The smash product of $A$ and $H$ is denoted by $\AH$ whose underlying $k$-module is $A\otimes_k H$ and whose multiplication is defined by
$$ (a\# h ) ( b\# g) = \sum_{(h)} a (h_1\cdot b) \# h_2g, $$
where $\Delta(h)=\sum_{(h)} h_1 \otimes h_2$ is the comultiplication of $h$.
Then $A\subseteq \AH=:B$ is an extension with additional module structure  whose module action is given by $a\#h \cdot b = a (h\cdot b)$. The subring of invariants is $A^B = A^H=\{ a \in A \mid h\cdot a = \epsilon(h)a \:\:\forall h\in H\}$. For more details on Hopf algebra action we refer to \cite{Montgomery}.

\subsection{Regularity of the subring of invariants}

From \ref{RegularRelativeFixRing} we get a characterisation of the regularity of the subring of invariants of $A$.

\begin{prop}\label{RegularFixRing} Let $A$ be a $k$-algebra with Hopf action $H$. Then $A^H$ is regular if and only if  $A$ is a semi-projective left $\AH$-module such that any cyclic left ideal generated by an $H$-invariant element is generated by an $H$-invariant idempotent.
\end{prop}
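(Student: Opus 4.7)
The plan is to apply Proposition \ref{RegularRelativeFixRing} to the ring extension $A\subseteq B:=\AH$, whose additional module structure is the smash-product action $(a\#h)\cdot b = a(h\cdot b)$ and for which $A^B = A^H$. Note first that for any $x\in A^H$ the cyclic $\AH$-submodule of $A$ generated by $x$ is literally $Ax$, since $(a\#h)\cdot x = a(h\cdot x) = \epsilon(h)\,ax$. Thus Proposition \ref{RegularRelativeFixRing} reduces the claim to verifying that its condition (2) --- \emph{$Ax$ is an $\AH$-direct summand of $A$ for every $x\in A^H$} --- is equivalent to the condition that $Ax = Ae$ for some idempotent $e\in A^H$.

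For the forward implication, fix $x\in A^H$ together with a decomposition $A = Ax \oplus K$ of $\AH$-modules, and let $\pi\in\EndX{\AH}{A}$ be the associated projection onto $Ax$. Because $1\in A$ generates $A$ as $\AH$-module and $\pi$ is $\AH$-linear, the element $e:=\pi(1)$ is $H$-invariant --- this is the functorial isomorphism $\HomX{\AH}{A}{A}\simeq A^H$ recalled in the introduction. The identity $\pi(a)=a\pi(1)=ae$ for $a\in A$ then yields simultaneously $Ax = \pi(A) = Ae$ and, using $e\in Ax$ and the fact that $\pi$ restricts to the identity on its image, $e = \pi(e) = e\cdot e$. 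The converse is immediate: if $e\in A^H$ is idempotent then $1-e$ is $H$-invariant as well, so $Ae$ and $A(1-e)$ are both $\AH$-stable and $A = Ae\oplus A(1-e)$ as $\AH$-modules.

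I do not foresee any real obstacle. Once Proposition \ref{RegularRelativeFixRing} is available, the entire argument amounts to a routine translation between ``$\AH$-direct summand'' and ``generated by an $H$-invariant idempotent'', resting solely on the elementary observation that an $\AH$-linear endomorphism of the cyclic $\AH$-module $A$ sends the cyclic generator $1$ into $A^H$.
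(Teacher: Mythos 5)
Your proposal is correct and follows exactly the route the paper intends: Proposition \ref{RegularFixRing} is stated there as an immediate consequence of Proposition \ref{RegularRelativeFixRing} applied to $B=\AH$, with the only additional content being the translation between ``$Ax$ is an $\AH$-direct summand of $A$'' and ``$Ax=Ae$ for an idempotent $e\in A^H$'', which you verify via the projection $\pi$ and the isomorphism $\EndX{\AH}{A}\simeq A^H$, just as the paper does implicitly (cf.\ its remark that a cyclic $B$-stable direct summand equals $Ae$ with $e^2=e\in A^B$). No gaps.
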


\subsection{}

In order to ensure that $A$ is a finitely presented object in $\sAH$ we will assume some finiteness conditions on $H$ or on its action. We say that a Hopf algebra $H$ {\it acts finitely} on a $k$-algebra $A$ if the image of the defining action $H\rightarrow \EndX{k}{A}$ is a finitely generated $k$-module or equivalently if $\AH/\mathrm{Ann}_{\AH}(A)$ is finitely generated as left $A$-module.
Recall that a $k$-algebra $A$ is called {\it affine} if it is finitely generated as $k$-algebra. 

Denote by $\epsilon:H\rightarrow k$ the counit of $H$. We need the following Lemma:

\begin{lem}\label{AffineHopfAlgebras} Let $H$ be a Hopf algebra over $k$ that is affine as $k$-algebra, then $\Ker{\epsilon}$ is a finitely generated left ideal.\end{lem}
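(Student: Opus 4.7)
The plan is to show that the elements $x_i := h_i - \epsilon(h_i)\cdot 1$, where $h_1,\dots,h_n$ are any fixed $k$-algebra generators of the affine Hopf algebra $H$, generate $\Ker{\epsilon}$ as a left ideal. Note that each $x_i$ lies in $\Ker{\epsilon}$ since $\epsilon(x_i)=\epsilon(h_i)-\epsilon(h_i)=0$, so the left ideal $L := \sum_{i=1}^n H x_i$ is contained in $\Ker{\epsilon}$. It therefore suffices to establish the reverse inclusion.

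The key step is the following decomposition: every monomial $m = h_{i_1}h_{i_2}\cdots h_{i_r}$ in the generators admits a presentation
\[
m \;=\; \epsilon(m)\cdot 1 + y_m, \qquad y_m \in L.
\]
I would prove this by induction on the length $r$. The case $r=0$ is trivial, and the case $r=1$ is just the definition of $x_i$. For the inductive step, write $m = h_{i_1} m'$ with $m' = h_{i_2}\cdots h_{i_r}$. By the inductive hypothesis $m' = \epsilon(m')\cdot 1 + y_{m'}$ with $y_{m'}\in L$. Substituting and using $h_{i_1} = \epsilon(h_{i_1})\cdot 1 + x_{i_1}$ one obtains
\[
m \;=\; \epsilon(h_{i_1})\epsilon(m')\cdot 1 \;+\; \epsilon(m')\, x_{i_1} \;+\; h_{i_1} y_{m'},
\]
and since $\epsilon$ is multiplicative, the scalar in front of $1$ is exactly $\epsilon(m)$, while the remaining two terms both lie in $L$ (the second because $L$ is a left ideal).

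Once this decomposition is available, the statement follows immediately: given $h \in \Ker{\epsilon}$, write $h$ as a $k$-linear combination of monomials $m_j$ in the $h_i$, decompose each as above, and apply $\epsilon$ to the equation to see that the scalar part $\sum_j \lambda_j \epsilon(m_j) = \epsilon(h) = 0$ vanishes. Hence $h$ equals a sum of elements of $L$, giving $\Ker{\epsilon} \subseteq L$.

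There is no real obstacle here beyond setting up the right generators and bookkeeping in the induction. The only point one must take care of is that after replacing $h_{i_1}$ by $\epsilon(h_{i_1})\cdot 1 + x_{i_1}$ the residual term involving $x_{i_1}$ has a scalar coefficient on the left (so it stays in $L$), which is precisely what makes the left-ideal statement (rather than only a two-sided one) work cleanly.
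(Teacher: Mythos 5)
Your proof is correct and follows essentially the same route as the paper: both arguments show that each monomial $\omega$ in the algebra generators satisfies $\omega - \epsilon(\omega)\cdot 1 \in \sum_i H(h_i - \epsilon(h_i))$, using the multiplicativity of $\epsilon$, and then conclude by linearity for an arbitrary $h \in \Ker{\epsilon}$. The paper writes this key identity as an explicit telescoping sum, whereas you obtain it by induction on the word length; these are the same computation in two guises.
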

\begin{proof}
Suppose that $H$ is affine and let $\cB \subseteq H$ be a finite set of elements which generate $H$ as a $k$-algebra.
We will show that $\Ker{\epsilon} = \sum_{b\in \cB} H(b-\epsilon(b))$. Obviously the right hand side is included in the left hand side. Note that for any word(=product) $\omega=b_1\cdots b_m$ with $b_i\in \cB$ we might set $a_0=\epsilon(\omega)$, $a_m=\omega$ and $a_i = b_1\cdots b_i \epsilon(b_{i+1}\cdots b_m)$ for $0< i < m$ and conclude that $\omega-\epsilon(\omega) \in \sum_{i=1}^m H(b_i-\epsilon(b_i))$, since as a telescopic sum we have
$$ \omega - \epsilon(\omega) = \sum_{i=1}^m a_i - a_{i-1} = \sum_{i=1} b_1\cdots b_{i-1}\epsilon(b_{i+1}\cdots b_n) (b_i - \epsilon(b_i)).$$
Take any element $h \in \Ker{\epsilon}$. Then there exist $\lambda_i\in k$ and words $\omega_i$ in $\cB$ such that 
$$h =h-\epsilon(h) =\sum_{i} \lambda_i [ \omega_i - \epsilon(\omega_i) ] \in \sum_{b\in \cB} H(b - \epsilon(b)).$$
Thus $\Ker{\epsilon}$ is finitely generated.
\end{proof}

In the telescopic sum argument in the proof of the last Lemma we made use of the fact that the counit $\epsilon$ of a Hopf algebra is an algebra homomorphism. We do not know whether this Lemma holds true for affine weak Hopf algebras.
\subsection{}
From Proposition \ref{RegularExtensionsModuleStructure} we deduce the next result:

\begin{thm}\label{RegularModuleAlgebras} If $H$ is an affine $k$-algebra or acts finitely on $A$, then $A$ is a finitely presented in  $\sAH$ and the following statements are equivalent:
\begin{enumerate}
 \item $A$ is {\it $H$-regular}, i.e. any finitely generated $H$-stable left ideal is generated by an $H$-invariant element;
\item $A$ is a projective generator in $\sAH$ and any cyclic left ideal generated by an $H$-invariant element is generated by an $H$-invariant idempotent.
\item $A^H$ is von Neumann regular and $(-)^H: \sAH \rightarrow A^H$-Mod is an equivalence.
 \item $A^H$ is von Neumann regular and $A$ is a projective generator in $\sAH$.
 \item $A$ is a regular module in $\sAH$.
\end{enumerate}
\end{thm}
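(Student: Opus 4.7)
The strategy is to reduce the theorem to Proposition \ref{RegularExtensionsModuleStructure} applied with $B=\AH$. The only substantive preliminary is the finite presentation of $A$ in $\sAH$; once that is established, conditions (3), (4), (5) translate directly from parts (d), (c), (a) of that proposition, condition (2) arises by combining its concluding ``projective generator'' statement with the characterisation of Proposition \ref{RegularRelativeFixRing}, and condition (1) is matched to the ``direct summand'' reformulation of its part (b).

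For the finite presentation, I use the criterion that a cyclic module is finitely presented in $\sAH$ iff $\HomX{\AH}{A}{-}\cong(-)^H$ commutes with direct limits on $\sAH$. If $H$ is affine, Lemma \ref{AffineHopfAlgebras} supplies $h_1,\ldots,h_n$ generating $\Ker{\epsilon}$ as a left $H$-ideal, so that
$$M^H = \{m\in M\mid h\cdot m = \epsilon(h)m\;\forall h\in H\} = \bigcap_{i=1}^n \Ker(h_i\cdot -),$$
a finite intersection of kernels of $k$-linear maps. If $H$ acts finitely, the action on any $M\in\sAH$ factors through the image of $H$ in $\EndX{k}{A}$, which is finitely generated as $k$-module, and finitely many $k$-generators of the image of $\Ker{\epsilon}$ therein suffice to cut out $M^H$ by the same formula. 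In either case the functor commutes with direct limits.

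With $B=\AH$ and $A^B=A^H$, Proposition \ref{RegularExtensionsModuleStructure} yields the equivalence of (5), of ``every finitely generated $H$-stable left ideal of $A$ is a direct summand'', of (4) (the promotion from ``generator'' to ``projective generator'' being the concluding line of that proposition), and of (3). Condition (2) sits alongside these: its first clause is again the projective generator conclusion, and its second clause --- every cyclic left ideal generated by an $H$-invariant is generated by an $H$-invariant idempotent --- is equivalent by Proposition \ref{RegularRelativeFixRing} to the regularity of $A^H$ under the semi-projectivity of $A$ inherited from projectivity in $\sAH$.

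Finally I match (1) to the direct summand condition. A direct summand $U=Ae$ with $e^2=e\in A^H$ is trivially generated by the invariant $e$. Conversely, given (1) together with the regularity of $A^H$ supplied by the other equivalent conditions, writing any finitely generated $H$-stable ideal as $U=Ax$ with $x\in A^H$ and picking a von Neumann inverse $y\in A^H$ of $x$, the element $e=yx\in A^H$ is an idempotent with $Ax = Ae$, and the decomposition $A=Ae\oplus A(1-e)$ is one of left $\AH$-modules because $h\cdot(ae)=(h\cdot a)e$ whenever $e\in A^H$. The main obstacle lies in the ``$H$ acts finitely'' case of the finite presentation step: although the image of $H$ in $\EndX{k}{A}$ carries no natural Hopf-algebra structure, one must verify that modules in $\sAH$ are indeed modules over this image and that finitely many $k$-linear operators therein suffice to test $H$-invariance.
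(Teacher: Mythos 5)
Your overall strategy---establish that $A$ is finitely presented in $\sAH$ and then feed everything into Proposition \ref{RegularExtensionsModuleStructure} (with Proposition \ref{RegularRelativeFixRing} handling the idempotent clause of (2))---is exactly the paper's. But your write-up has two genuine gaps. The more serious one is the circularity in your treatment of condition (1): you derive the direct-summand property from (1) only ``together with the regularity of $A^H$ supplied by the other equivalent conditions'', i.e.\ you prove $(1)\wedge(3)\Rightarrow(5)$, which contributes nothing to a cycle of implications; some implication out of (1) \emph{alone} is needed. Your von Neumann inverse trick cannot supply it: from $U=Ax$ with $x\in A^H$ one cannot manufacture an idempotent generator without already knowing that $x$ is a regular element of $A^H$. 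Indeed, under the literal reading of (1) no such implication exists (take $H=k$ acting trivially on $A=\mathbb{Z}$: every finitely generated ideal is generated by an invariant element, yet $\mathbb{Z}$ is not regular); the paper's proof tacitly identifies (1) with condition (b) of Proposition \ref{RegularExtensionsModuleStructure}, reading ``generated by an $H$-invariant element'' as ``a direct summand, hence generated by an $H$-invariant idempotent'', and under that reading the matching is immediate. Either way, the step as you wrote it does not close the equivalence.

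The second gap is the one you flag yourself: the ``$H$ acts finitely'' case of finite presentation. The paper's route avoids your direct-limit criterion entirely and is worth adopting: replace $\AH$ by $B=\AH/\mathrm{Ann}_{\AH}(A)$, which by the very definition of finite action is finitely generated as a left $A$-module; since $\ov{\alpha}:B\to A$ splits as a map of left $A$-modules, $\Ker{\ov{\alpha}}$ is an $A$-module direct summand of $B$, hence finitely generated as a left $A$-module and a fortiori as a left ideal of $B$, so $A$ is finitely presented in $B$-Mod and therefore in $\sigma[{_BA}]=\sAH$. (Your sketch can be completed, but not for free: one must note that the counit factors through $\ov{\epsilon}:\ov{H}\to k$ on the image $\ov{H}$ of $H$ in $\EndX{k}{A}$, and that $\Ker{\ov{\epsilon}}$ is a $k$-module direct summand of the finitely generated module $\ov{H}$ via the splitting $1\mapsto 1$, hence itself finitely generated over $k$.) In the affine case your observation that $(-)^H=\bigcap_{i}\Ker{h_i\cdot -}$ commutes with direct limits is a legitimate alternative, but the paper's computation $\Ker{\alpha}=A\#\Ker{\epsilon}=\sum_{b\in\cB}\AH\,(1\#(b-\epsilon(b)))$ exhibits the finitely generated presenting ideal directly and needs no appeal to the limit characterisation of finite presentation in $\sigma[M]$.
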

\begin{proof}
 Once we showed that $A$ is finitely presented in $\sAH$, the result follows from \ref{RegularExtensionsModuleStructure}. 
If $H$ acts finitely on $A$, then we might substitute $\AH$ by $B= \AH/\mathrm{Ann}_{\AH}(A)$ which is finitely generated as left $B$-module. Also $\alpha$ lifts to a map $\ov{\alpha}:B\rightarrow A$ and splits as left $A$-module map. Thus $\Ker{\ov{\alpha}}$ is finitely generated as left $A$-module and thus as left ideal of $B$, i.e. $A$ is finitely presented in $B$-Mod and hence in $\sigma[{_BA}]=\sAH$.

On the other hand suppose that $H$ is affine, then $\Ker{\epsilon}$ is a finitely generated left ideal by Lemma \ref{AffineHopfAlgebras}. For the module algebra $A$ and $\Ker{\alpha:\AH\rightarrow A}$, we have that if $x=\sum_{i=1}^n a_i \# h_i \in \Ker{\alpha}$, then
$$x = \sum_{i=1}^n a_i \# h_i - \left(\sum_{i=1}^n a_i\epsilon(h_i) \right)\# 1 =  \sum_{i=1}^n a_i \# (h_i -\epsilon(h_i)) \in A\# \Ker{\epsilon}.$$ Thus $\Ker{\alpha} = A \# \Ker{\epsilon} = \sum_{b\in \cB} \AH (1\#(b - \epsilon(b)))$ is a finitely generated left ideal of $\AH$ and therefore $A$ is finitely presented.
\end{proof}

\subsection{}
Note that the notion of regularity used here is different from the concept of an $H$-regular module algebra as defined by \cite{s.zhang}. There the author define  an element $a$ of an $H$-module algebra $A$ to be $H$-regular if $a\in (H\cdot a)A(H\cdot a)$ and calls $A$ $H$-regular if every element is $H$-regular.

\subsection{The envelopping Hopf algebroid}
In general a Hopf action does not extend to the envelopping algebra $A^e$ unless $H$ is cocommutative.
In order to study the two-sided $H$-stable ideals of a Hopf module algebra $A$ with Hopf action $H$, one defines a new product on the tensor product $A^e\otimes H$ as follows:
$$[(a\otimes b)\bowtie h ][(a'\otimes b')\bowtie h'] = \sum_{(h)} a(h_1 \cdot a') \otimes (h_3\cdot b')b \bowtie h_2h' $$
for all $a\otimes b, a'\otimes b' \in A^e$ and $h,h'\in H$.
This construction had been used by the author in \cite{Lomp} (see also \cite{Lomp05}) in order to define the central closure of a module algebra $A$ as the self-injective hull of $A$ as $\AAH$-module and had also been used by Connes and Moscovici in  \cite{ConnesMoscovici}). A similar construction had been used by L.Kadison in \cite{Kadison} which in \cite{PanaiteOystaeyen} was shown to be isomorphic to the construction by Connes-Moscovici. Following Kadison, we denote this algebra on $A^e\otimes H$ by $\AAH$ and call it the {\it envelopping Hopf algebroid} of $A$ and $H$. For any left $A^e\bowtie H$-module $M$ denote by 
$$Z(M)^H := \{m \in M \mid am=ma \wedge hm=\epsilon(h)m \forall a \in A, h\in H\}.$$
Then since $A\subseteq \AAH$ is again an extension with additional module structure we have that  $Z( - )^H$ is a functor from $A^e\bowtie H \rightarrow Z(A)^H$-Mod and that  $$\HomX{A^e\bowtie H}{A}{M} \rightarrow Z(M)^H \:\:\: f \mapsto (1)f$$
is a functorial isomorphism. Note that $Z(A)^H:= Z(A)\cap A^H\simeq \EndX{\AAH}{A}$.

From \ref{RegularRelativeFixRing} we get a characterisation of the regularity of the subring of central invariants of $A$.
\begin{cor}\label{RegularFixRingBi}
$Z(A)^H$ is regular if and only if  $A$ is a semi-projective left $\AAH$-module such that any cyclic ideal generated by a central $H$-invariant element is generated by a central $H$-invariant idempotent.
\end{cor}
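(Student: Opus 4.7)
The plan is to invoke Proposition \ref{RegularRelativeFixRing} applied to the extension $A\subseteq B:=\AAH$, which is an extension with additional module structure as explained in the paragraph preceding the statement (with module action $(a\otimes b\bowtie h)\cdot x = a(h\cdot x)b$). First I would identify the fixed ring: the definition of $Z(M)^H$ specialised to $M = A$ gives $A^B = Z(A)^H$, and the functorial isomorphism $\HomX{\AAH}{A}{M}\cong Z(M)^H$ stated in the excerpt supplies $\EndX{\AAH}{A}\cong Z(A)^H$. Then Proposition \ref{RegularRelativeFixRing} reads: $Z(A)^H$ is von Neumann regular if and only if (i) $A$ is semi-projective as left $\AAH$-module and (ii) every cyclic left $\AAH$-ideal generated by an element $x \in Z(A)^H$ is a direct summand of $A$ as $\AAH$-module.

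Next I would translate condition (ii) into the idempotent condition appearing in the statement. For $x \in Z(A)^H$, centrality and $H$-invariance give $(a\otimes b \bowtie h)\cdot x = a(h\cdot x)b = \epsilon(h)axb = \epsilon(h)abx$, so that $\AAH \cdot x = AxA = Ax$, which is precisely the (two-sided) cyclic ideal generated by $x$. Moreover, as noted in the excerpt just before Proposition \ref{PropertiesBiregular}, a cyclic $B$-stable ideal $B\cdot x$ is a direct summand of $A$ exactly when it equals $Ae$ for some idempotent $e \in A^B = Z(A)^H$; conversely, whenever $e \in Z(A)^H$ is idempotent, both $Ae$ and $A(1-e)$ are central $H$-stable ideals and $A = Ae \oplus A(1-e)$ is a direct sum decomposition of $\AAH$-modules. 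Hence condition (ii) is equivalent to the requirement that every cyclic ideal generated by a central $H$-invariant element be generated by a central $H$-invariant idempotent.

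Combining the two translations yields the claimed equivalence. The only genuine step is the verification that the $\AAH$-action on $x\in Z(A)^H$ collapses to ordinary left-multiplication by $A$, so that the cyclic $\AAH$-submodule $\AAH\cdot x$ coincides with the cyclic ideal $Ax$ referred to in the statement; after that the corollary is a direct transcription of Proposition \ref{RegularRelativeFixRing}. I do not anticipate any serious obstacle, since all the ingredients---the identification $A^B = Z(A)^H$, the idempotent characterisation of direct summands among cyclic $B$-stable ideals, and the definition of semi-projectivity---are already in place in the preceding sections.
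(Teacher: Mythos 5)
Your proposal is correct and follows exactly the route the paper intends: the corollary is stated there as an immediate specialisation of Proposition \ref{RegularRelativeFixRing} to the extension $A\subseteq \AAH$, using $A^{\AAH}=Z(A)^H$ and the standard identification of direct summands among cyclic $B$-stable ideals with ideals generated by idempotents in $A^B$. Your two verifications (that $\AAH\cdot x=Ax$ for central $H$-invariant $x$, and the idempotent translation) are precisely the details the paper leaves implicit.
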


\subsection{}
As before we need to ensure that $A$ is a finitely presented object in $\sAAH$ in order to apply \ref{RegularExtensionsModuleStructure}.

\begin{lem}
If $A$ and $H$ are affine $k$-algebras, then $A$ is a finitely presented module in $\sAAH$.
\end{lem}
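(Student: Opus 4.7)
The plan is to show that the kernel of the canonical left $\AAH$-module epimorphism $\alpha: \AAH \to A$, given by $(a\otimes b)\bowtie h \mapsto \epsilon(h)ab$, is finitely generated as a left ideal of $\AAH$; this yields a finite presentation of $A$ as a left $\AAH$-module and, a fortiori, as an object of $\sAAH$. The argument follows the template of Theorem \ref{RegularModuleAlgebras} for the smash product, but now proceeds via two telescoping reductions: one using affineness of $H$ (via Lemma \ref{AffineHopfAlgebras}) and one using affineness of $A$ applied to the enveloping algebra $A^e$.

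Let $\cS\subseteq A$ and $\cB\subseteq H$ be finite algebra generating sets. Consider the finite set
$$X := \{(s\otimes 1 - 1\otimes s)\bowtie 1 \mid s\in\cS\} \cup \{(1\otimes 1)\bowtie (b-\epsilon(b)) \mid b\in\cB\} \subseteq \Ker{\alpha},$$
and let $L$ denote the left $\AAH$-ideal it generates. From the multiplication rule for $\AAH$ and the counit axioms one verifies three facts: $u\mapsto u\bowtie 1$ defines a ring homomorphism $A^e\hookrightarrow\AAH$; $h\mapsto(1\otimes 1)\bowtie h$ defines a ring homomorphism $H\hookrightarrow\AAH$ (using $\sum\epsilon(h_1)h_2\epsilon(h_3)=h$); and every element factors as $(a\otimes b)\bowtie h=[(a\otimes b)\bowtie 1]\cdot[(1\otimes 1)\bowtie h]$. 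By Lemma \ref{AffineHopfAlgebras}, $\Ker{\epsilon}=\sum_{b\in\cB} H(b-\epsilon(b))$, so applying the second ring embedding shows $(1\otimes 1)\bowtie h \in L$ for every $h\in\Ker{\epsilon}$.

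Given an arbitrary $x=\sum_i (a_i\otimes c_i)\bowtie h_i\in\Ker{\alpha}$, the identity
$$x-\sum_i\epsilon(h_i)(a_i\otimes c_i)\bowtie 1 \;=\; \sum_i[(a_i\otimes c_i)\bowtie 1]\cdot[(1\otimes 1)\bowtie(h_i-\epsilon(h_i))] \;\in\; L$$
reduces the task to showing $\tilde c\bowtie 1\in L$ for $\tilde c:=\sum_i\epsilon(h_i)\,a_i\otimes c_i\in A^e$. Since $x\in\Ker{\alpha}$ and $x-(\tilde c\bowtie 1)\in L\subseteq\Ker{\alpha}$, one has $\tilde c\in\Ker{\mu}$, where $\mu:A^e\to A$ is the multiplication map. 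The telescoping identity
$$1\otimes ab - ab\otimes 1 \;=\; (1\otimes b)(1\otimes a-a\otimes 1) + (a\otimes 1)(1\otimes b-b\otimes 1),$$
by induction on word length in $\cS$, shows that $\Ker{\mu}$ is the left $A^e$-ideal generated by $\{s\otimes 1-1\otimes s\mid s\in\cS\}$; transporting this along $A^e\hookrightarrow\AAH$ places $\tilde c\bowtie 1$ in $L$. Hence $\Ker{\alpha}=L$ is finitely generated. The main obstacle is coordinating the two telescoping arguments inside the twisted multiplication of $\AAH$; what renders this routine is the factorisation $(a\otimes b)\bowtie h=[(a\otimes b)\bowtie 1]\cdot[(1\otimes 1)\bowtie h]$, which cleanly separates the $A^e$-part from the $H$-part and lets each affine hypothesis be exploited independently.
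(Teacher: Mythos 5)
Your proof is correct and follows essentially the same route as the paper: the same augmentation map $\alpha$, the same splitting of $\Ker{\alpha}$ into a part lying in $A^e\bowtie\Ker{\epsilon}$ (handled by Lemma \ref{AffineHopfAlgebras}) and a part lying in $\Ker{\mu}\bowtie 1$ (handled by finite generation of the augmentation ideal of $A^e$ for affine $A$). The only difference is that you spell out what the paper leaves as "well-known" -- the telescoping argument for $\Ker{\mu}$ -- and explicitly verify the ring embeddings $A^e\hookrightarrow\AAH$ and $H\hookrightarrow\AAH$ used to transport the generators.
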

\begin{proof}
Consider $\alpha: \AAH \rightarrow A$ by $a\otimes b \bowtie h \mapsto a\epsilon(h)b$. For any $x=\sum_{i=1}^n a_i\otimes b_i \bowtie h_i \in \Ker{\alpha}$ we have $\sum_{i=1}^n a_i\epsilon(h_i)b_i = 0$. Hence
\begin{eqnarray*}
 x &=& \sum_{i=1}^n a_i\otimes b_i \bowtie h_i  - \left( \sum_{i=1}^n a_ib_i\epsilon(h_i) \right) \otimes 1 \bowtie 1  + \left[ \sum_{i=1}^n a_i\otimes b_i \bowtie \epsilon(h_i) - \sum_{i=1}^n a_i\otimes b_i \bowtie \epsilon(h_i)\right]\\
 &=& \sum_{i=1}^n a_i  \otimes b_i \bowtie [h_i - \epsilon(h_i)]  + \sum_{i=1}^n a_i\epsilon(h_i) [1\otimes b_i - b_i\otimes 1] \bowtie 1\\
&\in & A^e \bowtie \Ker{\epsilon} + A \Ker{\mu} \bowtie 1
\end{eqnarray*}
where $\Ker{\mu}$ is the augmentation ideal of the envelopping algebra, i.e. the kernel of the multiplication map $\mu:A^e\rightarrow A$.
Hence we see that $\Ker{\alpha}$ is generated as left ideal of $\AAH$ by elements of $1\otimes \Ker{\epsilon}$ and $\Ker{\mu}\bowtie 1$.
 It is well-known that $\Ker{\mu}$ is finitely generated left ideal of $A^e$ if $A$ is affine and by \ref{AffineHopfAlgebras} it follows that $\Ker{\epsilon}$ is finitely generated if $H$ is affine.
\end{proof}

\subsection{$H$-biregular module algebras}

The last statement,  \ref{RegularExtensionsModuleStructure} and \ref{CharacterisationBiregularity} yield the main result in this section which generalises   \cite[1.2]{Castella} from group actions to Hopf actions.

\begin{cor} Let $A$ and $H$ be affine $k$-algebras, then the following statements are equivalent:
\begin{enumerate}
\item[(a)] $A$ is $H$-biregular, i.e. every finitely generated $H$-stable two-sided ideal of $A$ is generated by a central $H$-invariant idempotent.
\item[(b)] $A$ is a projective generator in $\sAAH$ and any ideal generated by a central $H$-invariant element is generated by an idempotent central $H$-invariant element.
\item[(c)] $A$ is a regular module in $\sAAH$.
\item[(d)] $Z(A)^H$ is von Neumann regular and one of the following statements hold:
\begin{enumerate}
	\item[(i)] the functor $Z(-)^H: \sAAH \longrightarrow Z(A)^H$-Mod is an equivalence.
 	\item[(ii)] $A$ is a projective generator in $\sAAH$.
	\item[(iii)] every maximal $H$-stable ideal $M$ of $A$ can be written as  $M=[Z(A)^H\cap M] A$.
	\item[(iv)] $A_m$ is $H$-simple for any maximal ideal $m$ of $Z(A)^H$.
\end{enumerate}
\end{enumerate}
\end{cor}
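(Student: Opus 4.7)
The plan is to apply the general machinery of Section 2 with the extension $A\subseteq B := \AAH$. Since $A^e\bowtie 1\subseteq \AAH$ and $A^e$ contains both left and right multiplications, we have $M(A)\subseteq B\subseteq \EndX{k}{A}$, so the setup of the relative biregularity subsection is in force. In particular $A^B = Z(A)\cap A^H = Z(A)^H$, and the $B$-stable left ideals of $A$ are exactly the two-sided $H$-stable ideals of $A$, so ``$A$ is $\AAH$-biregular'' in the sense of Section 2 is literally condition (a) of the Corollary. The preceding Lemma guarantees that $A$ is finitely presented in $\sAAH$ since $A$ and $H$ are affine.

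With this setup Proposition \ref{RegularExtensionsModuleStructure} applies directly and yields the equivalence of (a), (c), (d)(i) and (d)(ii): the property that every finitely generated $B$-stable left ideal is a direct summand becomes, under finite presentation, equivalent to $A$ being a regular module in $\sAAH$, to $Z(A)^H$ being regular with $A$ a generator (hence projective generator) in $\sAAH$, and to $Z(-)^H$ being an equivalence onto $Z(A)^H$-Mod. Next, Theorem \ref{CharacterisationBiregularity} gives the equivalence of (a) with (d)(iii) and (d)(iv), translating ``maximal $B$-stable ideal $M=AM^B$'' and ``$A_m$ is $B$-simple'' into their $H$-theoretic counterparts using $A^B=Z(A)^H$.

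It remains to weave (b) into the chain. Assuming (a), Proposition \ref{PropertiesBiregular}(2) says $A$ is a $Z(A)^H$-Ideal Algebra; combined with the regularity of $Z(A)^H$, for any $x\in Z(A)^H$ we have $Z(A)^Hx = Z(A)^He$ for some idempotent $e\in Z(A)^H$, whence $Ax = Ae$, giving the idempotent clause of (b), while the projective generator property is already (d)(ii). For the converse, I would invoke Corollary \ref{RegularFixRingBi}: projective modules are semi-projective, and the idempotent condition in (b) is precisely the statement that every cyclic left $B$-ideal generated by a central $H$-invariant element is a direct summand, hence $Z(A)^H$ is regular, placing us in (d)(ii).

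The main obstacle is conceptual rather than computational: one must verify cleanly that within the enveloping Hopf algebroid framework the inclusion $M(A)\subseteq \AAH$ makes the general results of Section 2 applicable without modification, and that the ``cyclic versus finitely generated'' matching from Proposition \ref{PropertiesBiregular}(3) aligns the Section-2 definition of $B$-biregularity with the formulation in (a) that speaks only of finitely generated two-sided $H$-stable ideals. Once that dictionary is set up the Corollary is essentially a repackaging of Proposition \ref{RegularExtensionsModuleStructure}, Theorem \ref{CharacterisationBiregularity} and Corollary \ref{RegularFixRingBi} for $B=\AAH$.
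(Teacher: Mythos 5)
Your proposal is correct and follows essentially the same route as the paper, which derives the Corollary by specialising the preceding finite-presentation Lemma, Proposition \ref{RegularExtensionsModuleStructure} and Theorem \ref{CharacterisationBiregularity} to $B=\AAH$ with $A^B=Z(A)^H$. You in fact supply more detail than the paper does (notably the dictionary between cyclic and finitely generated $B$-stable ideals via Proposition \ref{PropertiesBiregular}(3) and the treatment of (b) via Corollary \ref{RegularFixRingBi}), all of which checks out.
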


\subsection{Relative semisimple extension}

Let $G$ be a finite group acting on an algebra $A$. The condition that $|G|$ is invertible is frequently used in the study of group actions because it implies that $A\subseteq A*G$ is a separable extension. The weaker condition on $A$ of having an {\it element of trace $1$}, i.e. an element $z\in A$ such that $t\cdot a = \sum_{g\in G} (g\cdot a) = 1$, where $t=\sum_{g\in G} g$, implies at least the projectivity of $A$ as $A*G$-module. Here we will analyse those concepts and carry them over to Hopf algebra actions. 

The antipode of a Hopf algebra $H$ is denoted by $S$. An element $t\in H$ is called a right(resp. left) integral in $H$ if $th=t\epsilon(h)$ 
(resp. $ht=\epsilon(h)t$) for all $h\in H$. it is well-known that $\sum_{(t)} S(t_1) \otimes t_2 h =  \sum_{(t)} hS(t_1) \otimes t_2$ for all $h\in H$.

\begin{prop}\label{PropCentralTrace}
Let $H$ be a Hopf algebra and $A$ a left $H$-module algebra.
Suppose $H$ has a non-zero right integral $t$ and $A$ admits a central element $z$ such that $S(t)\cdot z = 1$, then 
$$c:= \sum_{(t)} (1\# S(t_1)) \otimes (z\# t_2)$$
is a Casimir element of $\AH$ that acts unitarily on $A$. Hence $A$ is a semisimple $(\AH,A)$-module.
\begin{enumerate}
\item  $A\subseteq \AH$ is a semisimple extension if $A^H \subseteq A$ is a $H^\ast$-Galois extension, i.e. $A$ is a generator in $\AH$-Mod.
\item $A\subseteq \AH$ is separable if $z\in A^H$ or  $H$ is cocommutative
\end{enumerate}
\end{prop}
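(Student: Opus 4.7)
First, I would check that $c = \sum_{(t)}(1\#S(t_1))\otimes_A(z\#t_2)$ is a Casimir element for $\AH$ over $A$. Since $\AH$ is generated as a $k$-algebra by the images of $A$ and $H$, it suffices to verify $bc = cb$ in $\AH\otimes_A\AH$ for $b = a\#1$ and for $b = 1\#h$. For $b = 1\#h$, I would transport the identity $\sum S(t_1)\otimes t_2h = \sum hS(t_1)\otimes t_2$ (quoted just before the proposition) via the $k$-linear map $\psi: H\otimes_k H\to\AH\otimes_A\AH$, $h'\otimes h''\mapsto(1\#h')\otimes_A(z\#h'')$; the two sides map exactly to $c(1\#h)$ and $(1\#h)c$. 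For $b = a\#1$, I would expand $(z\#t_2)(a\#1) = \sum z(t_2\cdot a)\#t_3$ using the triple coproduct, use centrality of $z$ to migrate $(t_2\cdot a)\in A$ across $\otimes_A$, apply $\Delta S = (S\otimes S)\tau\Delta$ to the ensuing smash multiplication, and then use the antipode axiom $\sum S(t_2)t_3 = \epsilon(t_2)$ followed by the counit to collapse the resulting four-index expression back to $\sum_{(t)}(a\#S(t_1))\otimes_A(z\#t_2) = (a\#1)c$.

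Next, I would verify that $c$ acts unitarily on $A$; by the argument of Proposition \ref{CasimirTraceOne} it suffices to show $\gamma\cdot 1 = 1$ where $\gamma = \sum_ic_ic^i$. Using $\Delta S = (S\otimes S)\tau\Delta$ and coassociativity gives $\gamma = \sum_{(t)}(S(t_2)\cdot z)\#S(t_1)t_3$, so $\gamma\cdot 1 = \sum(S(t_2)\cdot z)\epsilon(S(t_1))\epsilon(t_3) = S(\sum\epsilon(t_1)t_2\epsilon(t_3))\cdot z = S(t)\cdot z = 1$, using $\epsilon\circ S = \epsilon$ and the counit identity $\sum\epsilon(t_1)t_2\epsilon(t_3) = t$. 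Corollary \ref{CasimirTraceOne3} then yields that $A$ is a $(\AH,A)$-semisimple $\AH$-module.

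For (1), $A$ being a generator of $\AH$-Mod is equivalent to $\sigma[{_{\AH}A}] = \AH$-Mod, whence Corollary \ref{CasimirCorollar}(3) directly gives that $A\subseteq\AH$ is a semisimple extension. For (2), separability demands the stronger equality $\gamma = 1_{\AH}$. If $z\in A^H$, then $S(t_2)\cdot z = \epsilon(t_2)z$ and $\gamma = z\#\sum\epsilon(t_2)S(t_1)t_3 = z\#\sum S(t_1)t_2 = z\#\epsilon(t) = (S(t)\cdot z)\#1 = 1$ (contracting the middle $\epsilon$ by counit, then applying the antipode axiom). If $H$ is cocommutative, then the inner coproduct symmetry gives $\sum t_1\otimes t_2\otimes t_3 = \sum t_1\otimes t_3\otimes t_2$, so $\gamma = \sum(S(t_3)\cdot z)\#S(t_1)t_2$; combining this with the identity $\sum S(t_1)t_2\otimes t_3 = 1\otimes t$ (antipode axiom applied to the outer coproduct) and extracting the first factor via $h\mapsto S(h)\cdot z$ yields $\gamma = (S(t)\cdot z)\#1 = 1$. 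The main obstacle throughout is the multi-index Sweedler bookkeeping, particularly in coordinating centrality of $z$ across the balanced tensor $\otimes_A$ with the antipode and counit axioms in the Casimir verification for $a\#1$ and in the cocommutative case of separability.
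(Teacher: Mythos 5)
Your proposal is correct and follows essentially the same route as the paper: the same Sweedler-calculus verification of the Casimir property (the paper does the single computation $c(a\#h)=(a\#h)c$ where you split into the generators $a\#1$ and $1\#h$, which is an equivalent reduction since the centralizer of $c$ is a subalgebra), the same evaluation $\gamma\cdot 1=S(t)\cdot z=1$ for the unitary action, the same appeal to the Casimir corollaries for semisimplicity and statement (1), and the same computation of $\mu(c)$ in the two separability cases.
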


\begin{proof}
The element $\sum_{(t)} 1\# S(t_1) \otimes z\# t_2$ is a Casimir element in  $\left( \AH \right) {\otimes_A} \left( \AH \right) $ because for all $a\# h \in \AH$:

\begin{eqnarray*}
c(a\#h) &=& \sum_{(t)} (1 \# S(t_1))\otimes (z \# t_2)(a\# h) \\
&=& \sum_{(t)} (1 \# S(t_1))\otimes z(t_2\cdot a)\# t_3h\\
&=& \sum_{(t)} (S(t_2)\cdot (t_3\cdot a))\# S(t_1) \otimes z\# t_4h\\
&=& (a\#1) \left( \sum_{(t)} 1\# S(t_1) \otimes z\# t_2h \right)\\
&=& (a\#1) \left( \sum_{(t)} 1\# hS(t_1) \otimes z\# t_2 \right)\\
&=& (a\#h \left( \sum_{(t)} 1\# hS(t_1) \otimes z\# t_2 \right) = (a\# h) c
\end{eqnarray*}
Moreover
$$\sum_{(t)} (1 \# S(t_1)) (z \# t_2) \cdot 1 = \sum_{(t)} S(t_1) \cdot ( z (t_2 \cdot 1)) = S(t) \cdot z = 1.$$
Thus $c$ acts unitarily on $A$.  By  \ref{CasimirTraceOne}, $A$ is a semisimple $(\AH,A)$-module.

If $A/A^H$ is a $H^\ast$-Galois extension, then  $\sigma[{_\AH}A]=\AH$ and the claim follows from  \ref{CasimirCorollar}.

Note that  $\mu(c) = \sum_{(t)} S(t_2)z \# S(t_1)t_3.$
If $z\in Z(A)^H$, then $\mu(c) = \sum_{(t)} z\# S(t_1)\epsilon(t_2)t_3 =  \epsilon(S(t))z \# 1 = S(t)z \#1 = 1\#1.$
If  $H$ is cocommutative, then $\mu(c) = \sum_{(t)} S(t_1)z \# S(t_2)t_3 = S(t)x\# 1 = 1\# 1.$
Hence in both cases $\AH$ is separable over $A$.
\end{proof}

\subsection{}
If the antipode is bijective and $A$ has a central element of trace $1$, i.e. $z\in Z(A)$ with $t\cdot z=1$ for a left integral $t$ of $H$, then $t'=S^{-1}(t)$ is a right integral, and  $S(t')\cdot z = 1$ holds, i.e. the condition of \ref{PropCentralTrace} is fulfilled.
\subsection{}
The observation that $A\subseteq \AH$ is separable if $z \in  Z(A)^H$ or $H$ cocommutative, can also be found in \cite[Theorem 1.11]{CohenFishman92} or  \cite{CohenFishmanErrata}, but under the hypothesis of $H$ being a Frobenius $k$-algebra  and thus finitely generated and projective as $k$-module. Note that the existence of a left (or right) integral forces a Hopf algebra in many cases to be finitely generated although there are examples of non-finitely generated ones (see \cite{Lomp_Integrals}).

\subsection{Regularity of smash products}

In \cite{AlfaroAraRio} the authors studied the regularity of skew group rings. They showed in particular that a skew group ring $A*G$ is regular if $A$ is regular, $G$ is locally finite and for every finite subgroup $H$ of $G$ there exists a central element of $H$-trace $1$. In this section we will show how much of their arguments go over to smash products.

\subsection{}

First note the following Corollary that we get from \ref{PropCentralTrace}:

\begin{cor}\label{centralStraceOneRegular} Let $H$ be a Hopf algebra acting on a regular module algebra $A$.
Assume that there exists a right integral $t$ of $H$ and a central element $z$ such that $S(t)\cdot z = 1$.
\begin{enumerate}
\item[(1)] If $H$ acts finitly on $A$, then $A$ is regular  in $\sigma[{_\AH A}]$,
$A^H$ is regular and $A^H$-Mod is Morita-equivalent to $\sigma[{_\AH A}]$.
\item[(2)] If $z$ $H$-invariant or $H$ is cocommutative or  $A/A^H$ is $H^\ast$-Galois and $_kH$ is finitely generated, then $\AH$ is regular.
\end{enumerate}
\end{cor}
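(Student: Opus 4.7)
The plan is to apply the Casimir-element machinery of Section~3 to the extension $A\subseteq\AH$, using the Casimir element $c = \sum_{(t)} (1\# S(t_1)) \otimes (z\# t_2)$ furnished by Proposition~\ref{PropCentralTrace}, which acts unitarily on $A$. Once this is in place, both statements follow by short module-theoretic arguments: the first via a direct reduction to the finite situation, the second via a case analysis on the three sufficient conditions.

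For (1), since $H$ acts finitely on $A$, the quotient $B := \AH/\mathrm{Ann}_{\AH}(A)$ is finitely generated as left $A$-module. The Casimir element $c$ descends to a Casimir element of $B$ over $A$ still acting unitarily on $A$, and one has $\sigma[{_BA}] = \sigma[{_{\AH}A}]$ with $A^B = A^H$. The three conclusions of (1) are then exactly the three conclusions of Corollary~\ref{CasimirCorollar}(2) applied to $B$.

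For (2) I would verify, in each of the three sufficient cases, that every left $\AH$-module is flat, forcing $\AH$ to be regular. In the $H^\ast$-Galois case, $A$ is a generator in $\AH$-Mod by definition, so $\sigma[{_{\AH}A}] = \AH$-Mod; combined with $_A\AH$ being finitely generated (as $_kH$ is), Corollary~\ref{CasimirCorollar}(2) shows $A$ is a regular module in $\AH$-Mod, i.e.\ every $\AH$-module is flat. In the remaining two cases $z\in A^H$ or $H$ cocommutative, Proposition~\ref{PropCentralTrace}(2) gives that $A\subseteq \AH$ is a separable extension, namely $\mu(c)=1$. Given any left $\AH$-module $M$, the map $s\colon M\to \AH\otimes_A M$, $m\mapsto \sum c_i\otimes c^i m$, is $\AH$-linear (by the $\AH$-centrality of $c$) and splits the multiplication $\AH\otimes_A M\to M$. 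Since $A$ is regular, $M$ is $A$-flat on restriction, and the natural identification $N\otimes_\AH(\AH\otimes_A M)\cong N\otimes_A M$ for right $\AH$-modules $N$ shows that $\AH\otimes_A M$ is $\AH$-flat. Hence $M$ is a direct summand of an $\AH$-flat module, so $M$ itself is flat and $\AH$ is regular.

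The main obstacle is keeping the separable-case bookkeeping straight: one must verify the $\AH$-linearity of the splitting $s$ and the $\AH$-flatness of $\AH\otimes_A M$ from the $A$-flatness of $M$. Both reduce to routine bimodule computations once the actions are tracked correctly, after which everything else is a direct invocation of Proposition~\ref{PropCentralTrace} and Corollary~\ref{CasimirCorollar}.
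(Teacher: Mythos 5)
Your proposal is correct, and for part (1) and the two separable cases of part (2) it follows the paper's proof essentially verbatim: reduce (1) to $B=\AH/\mathrm{Ann}_{\AH}(A)$ with the descended Casimir element and invoke Corollary \ref{CasimirCorollar}(2); for $z\in A^H$ or $H$ cocommutative use the separability from Proposition \ref{PropCentralTrace}(2). The only genuine divergence is in the $H^\ast$-Galois sub-case: the paper argues element-by-element, noting that a cyclic left ideal of $\AH$ is finitely generated as a left $A$-module (since $_kH$ is finitely generated), hence a direct summand as an $A$-module by regularity of $A$, hence a direct summand as a left ideal by $(\AH,A)$-semisimplicity, giving regularity of $\AH$ via von Neumann's cyclic-ideal criterion; you instead feed the same hypotheses into Corollary \ref{CasimirCorollar}(2) to get that $A$ is a regular module in $\sigma[{_{\AH}A}]=\AH$-Mod and conclude via Auslander's all-modules-flat characterisation. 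Both routes are valid and of comparable length; yours has the small advantage of reusing \ref{CasimirCorollar}(2) uniformly, while the paper's is more elementary and avoids the (correct but implicit) identification of flatness in $\sigma[{_{\AH}A}]$ with ordinary flatness when $A$ is a subgenerator. You also spell out the standard fact that a separable extension of a regular ring is regular (splitting $\AH\otimes_A M\to M$ by $m\mapsto c\cdot m$ and checking $\AH$-flatness of $\AH\otimes_A M$), which the paper simply cites as known; your verification is accurate.
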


\begin{proof}
 (1) If we substitute $\AH$ by $B=\AH/\mathrm{Ann}(A)$, then $_AB$ is finitely generated and $c=\sum_{(t)} 1\# S(t_1) \otimes z\# t_2 \in \AH \otimes_A \AH$ can be lifted to $c'\in B\otimes_A B$ which still acts unitarily on $A$. By Corollary \ref{CasimirCorollar}, $A$ has the properties stated above.

(2) if $z\in A^H$ or $H$ is cocommutative, then by \ref{PropCentralTrace}, $A\subseteq \AH$ is separable and hence $\AH$ is regular as $A$ was regular. In case $A/A^H$ is $H^*$-Galois we have that $\sAH = \AH$-Mod and by \ref{PropCentralTrace}, $A \subseteq \AH$ is a semisimple extension. Since $_kH$ is finitely generated, $_A\AH$ is finitely generated. Hence any cyclic left ideal $I$ of $\AH$ is finitely generated as left $A$-submodule of $\AH$. Since $A$ is regular $I$ is a direct summand of $\AH$ and as $A\subseteq \AH$ is semisimple, it is also a direct summand of $\AH$ as left ideal.
\end{proof}


\subsection{Locally finite Hopf algebras}

Call  an extension $A\subseteq B$ {\it locally separable} if every element of $B$ is contained in an intermediate algebra $A\subseteq C \subseteq B$ such that $C$ is a separable extension of $A$ (see also A.Magid's definition \cite{Magid}).
Of course, if $A\subseteq B$ is locally separable and $A$ is regular, then $B$ is regular, because any element $x\in B$ is contained in a separable extension $C$ of $A$. And if $A$ is regular, then also $C$. Thus $x=xyx$ for some $y\in C\subseteq B$. Hence $B$ is regular.
The characterisation of regular group rings $k[G]$ can actually be stated as $k\subseteq k[G]$ being locally separable and $k$ being regular.
Alfaro, Ara and del Rio proved in  \cite[Theorem 1.3]{AlfaroAraRio} that if $G$ is a locally finite group acting on a regular ring $A$ such that for every finite subgroup $H$ there exists a central element of trace $1$ with respect to $H$, then the skew group ring $A*G$ is regular. We will slightly generalize there result to Hopf algebra actions by showing that the  hypotheses of their result imply that $A\subseteq A*G$ is locally separable.

A group is called locally finite if any finitely generated subgroup is finite.

\begin{defn}
Let $H$ be a Hopf algebra over $k$.
A Hopf algebra is called {\it locally finite} if any finite set $X\subseteq H$ is contained in a Hopf subalgebra of $H$  which contains a non-zero right integral.
\end{defn}

Note that any Hopf algebra $H$ which is free as a module over $k$ is finitely generated as $k$-module if and only if it contains a non-zero right integral (see \cite{Lomp_Integrals}). A group ring $k[G]$ is of course locally finite if $G$ is locally finite.
\subsection{}

We are now in position to generalize \cite[Theorem 1.3]{AlfaroAraRio}:

\begin{cor}
Let $H$ be a locally finite Hopf algebra over $k$ acting on a $k$-algebra $A$ such that for any Hopf subalgebra $K$ of $H$ that contains a non-zero right integral $t$, there exists a central element $z_t \in A$ with $S(t)\cdot z_t = 1$.
If $H$ is cocommutative or $z_t \in Z(A)^H$ for all right integrals $t$, then $A\subseteq \AH$ is locally separable. Hence if $A$ is regular, so is $\AH$.
\end{cor}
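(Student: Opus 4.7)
The plan is to show that every element of $\AH$ sits inside a separable subextension over $A$, so that $A\subseteq \AH$ is locally separable in the sense just defined. The regularity conclusion is then immediate from the observation recalled earlier in the section that locally separable extensions of a regular ring are regular.

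First I would take an arbitrary element $x \in \AH$ and write it as a finite sum $x = \sum_{i=1}^n a_i\# h_i$. The finite set $\{h_1, \ldots, h_n\} \subseteq H$ is, by the local finiteness hypothesis, contained in some Hopf subalgebra $K\subseteq H$ which admits a non-zero right integral $t$. In particular $x$ lies in the intermediate algebra $C := A\# K$, which manifestly contains $A$ and is contained in $\AH$. The action of $K$ on $A$ is just the restriction of the $H$-action, and $A\# K$ is the corresponding smash product sitting inside $\AH$.

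Next I would verify that the extension $A \subseteq A\# K$ is separable by applying Proposition \ref{PropCentralTrace}(2) with $K$ in place of $H$. By hypothesis there exists a central element $z_t\in Z(A)$ with $S(t)\cdot z_t = 1$, so the Casimir element $c = \sum_{(t)} (1\# S(t_1))\otimes (z_t\# t_2)$ of $A\# K$ is available and acts unitarily on $A$. To deduce separability via the proposition, one needs either $K$ cocommutative or $z_t \in A^K$. If $H$ is cocommutative then so is every Hopf subalgebra $K$. If instead $z_t \in Z(A)^H$, then because $K\subseteq H$ one has $z_t \in Z(A)^K$ as well. In either case Proposition \ref{PropCentralTrace}(2) yields that $A\subseteq A\# K$ is separable.

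Since every element $x\in \AH$ lies in such a separable intermediate algebra, the extension $A\subseteq \AH$ is locally separable. If $A$ is regular, the general principle recalled at the start of the subsection then forces $\AH$ to be regular: for each $x\in \AH$ choose a separable intermediate $A\subseteq C\subseteq \AH$ with $x\in C$; regularity of $A$ transfers to $C$, giving $y\in C\subseteq \AH$ with $x=xyx$. No real obstacle arises; the only point that requires a second look is checking that $t$, chosen as a right integral in $K$, continues to satisfy the identities needed by Proposition \ref{PropCentralTrace} inside $A\# K$, but this is automatic since the comultiplication on $K$ is the restriction of that of $H$.
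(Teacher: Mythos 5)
Your proposal is correct and follows essentially the same route as the paper's proof: pick an arbitrary element of $\AH$, use local finiteness to enclose its Hopf-algebra components in a Hopf subalgebra $K$ with a non-zero right integral, and invoke Proposition \ref{PropCentralTrace}(2) to get separability of $A\subseteq A\# K$, whence local separability and the transfer of regularity. The extra verifications you supply (that cocommutativity and the invariance of $z_t$ pass to $K$) are details the paper leaves implicit.
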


\begin{proof}
Let $x:=\sum_i^k a_i\# h_i \in \AH$. By hypothesis $K:=<\{h_1, \ldots, h_k\}>$ contains a non-zero right integral $t$. By \ref{PropCentralTrace}(2), $A\subseteq A\#K$ is separable, and hence regular if $A$ was regular.
\end{proof}

\subsection{}
As a consequence we have that if $H$ is a cocommutative Hopf algebra acting on a commutative regular $k$-algebra having a central element of trace $1$, then $\AH$ is regular (which partly generalizes  \cite[Corollary 2.5]{AlfaroAraRio}).  It had been shown in \cite[2.4]{AlfaroAraRio}, that if a skew-group ring $A*G$ is regular, then is also $A$. This is not anymore true for smash products as it is easily seen by the fact, that for any finite dimensional Hopf algebra $H$ over a field $k$, the smash product $H\# H^* \simeq M_n(k)$ is isomorphic to a semsimple  artinian ring, whether $H$ is semisimple or not. However we have that if $H$ is an $n$-dimensional cosemisimple Hopf algebra over a field $k$ acting on an algebra $A$ such that $\AH$ is regular, then $A$ is regular. Simply because by the Blattner-Montgomery duality one has $(\AH) \# H^* \simeq M_n(A)$ and since $H^*$ is separable over $k$, we have $\AH\#H^*$ being separable over $\AH$ inducing regularity on $M_n(A)$ and hence on $A$.

\section{Regularity and injectivity of the subring of invariants}

Note that from Zelmanowitz result  \ref{Zelmanowitz_polyform} we get

\begin{cor} Let $A$ be an $H$-module algebra that is projective in $\sigma[A]$.
If $A^H$ is large in $A$ then  $A^H$ is left self-injective and von Neumann regular if and only if $A$ is a self-injective left $\AH$-module which is non-singular in $\sAH$. In this case $A$ is also $H$-semiprime.
\end{cor}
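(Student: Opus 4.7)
The plan is to derive this corollary as a direct specialisation of Lemma~\ref{Zelmanowitz_polyform} to the extension $A\subseteq B=\AH$, complemented by a short independent argument for $H$-semiprimeness. Both hypotheses of that lemma---that $A$ is projective in $\sAH$ and that $A^H$ is large in $A$---are already built into the corollary's assumptions.

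For the forward implication I would simply invoke the two clauses of Lemma~\ref{Zelmanowitz_polyform}: clause~(1) yields that $A$ is self-injective as a left $\AH$-module, and clause~(2) yields that $A$ is non-singular in $\sAH$. For the converse, assume $A$ is self-injective in $\AH$-Mod and non-singular in $\sAH$. By the remark recorded immediately after Lemma~\ref{Zelmanowitz_polyform}, non-singularity in $\sAH$ coincides with Zelmanowitz' polyform condition, so $A$ is a self-injective polyform $\AH$-module; the cited fact \cite[11.1]{wisbauer96} then asserts that the endomorphism ring of any self-injective polyform module is itself self-injective and von Neumann regular, and the identification $\EndX{\AH}{A}\simeq A^H$ via $f\mapsto (1)f$ transfers both properties to $A^H$.

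To conclude that $A$ is $H$-semiprime I would argue contrapositively: if $I$ were a non-zero nilpotent $H$-stable ideal of $A$ with $I^n=0$, then $I\cap A^H$ would be a two-sided ideal of the subring $A^H$ satisfying $(I\cap A^H)^n\subseteq I^n=0$, hence a nilpotent ideal of $A^H$. Since $A^H$ is von Neumann regular and therefore semiprime, this forces $I\cap A^H=0$, contradicting largeness of $A^H$ in $A$. The only slightly delicate step in the whole proof is the converse half of the equivalence, where one must exploit the paper's remark identifying non-singularity in $\sAH$ with polyformity in order to bring \cite[11.1]{wisbauer96} to bear on the endomorphism ring; everything else is a sequence of direct citations and the ideal-intersection argument just given.
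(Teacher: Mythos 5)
Your proposal is correct and follows essentially the same route as the paper: the forward direction is the two clauses of Lemma~\ref{Zelmanowitz_polyform}, the converse is exactly the observation recorded in \ref{CorollaryZelmanowitz} (self-injective polyform modules have self-injective regular endomorphism rings by \cite[11.1]{wisbauer96}, transferred to $A^H\simeq\EndX{\AH}{A}$), and the $H$-semiprimeness argument via $I\cap A^H=I^H$ being nilpotent in the regular ring $A^H$ and then largeness is the paper's argument verbatim (the paper phrases it with $I^2=0$, which suffices). No gaps.
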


\begin{proof}
 The equivalence of the statements follows verbatim from \ref{Zelmanowitz_polyform}. If $I$ is an $H$-stable ideal of $A$ with $I^2=0$, then $(I^H)^2=0$. But since $A^H$ is regular, $I^H=0$ and since $A^H$ is large $I=0$.
\end{proof}

\subsection{}
To compare the injectivity of $A$ and its subring of invariants, we need the following Lemma which is probably known:

\begin{lem}\label{InjectivitiyLemma} Let $S\subseteq T$ be rings such that $T_S$ is flat. If $T$ is
left self-injective, then so is $S$.
\end{lem}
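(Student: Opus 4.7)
The plan is to verify Baer's criterion for $S$: given any left ideal $I\subseteq S$ and any left $S$-linear map $f:I\to S$, produce an extension $\tilde f:S\to S$.

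First I would transport $f$ up to $T$. Flatness of $T_S$ makes the inclusion $I\hookrightarrow S$ tensor to an injection $T\otimes_S I\hookrightarrow T\otimes_S S\cong T$, with image the left $T$-ideal $TI$. Because $f$ is $S$-linear, the formula $t\otimes x\mapsto tf(x)$ is well defined on $T\otimes_S I$ and is left $T$-linear, so we obtain $g:TI\to T$.

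Next I would invoke the left self-injectivity of $T$: Baer's criterion for $T$ extends $g$ to a left $T$-linear endomorphism $h:T\to T$, which must be right multiplication by $t_0:=h(1)\in T$. In particular $f(x)=xt_0$ lies in $S$ for every $x\in I$, so $It_0\subseteq S$, and the left $S$-linear map $\phi:S\to T$, $s\mapsto st_0$, extends $f$ but a priori only into $T$.

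The main obstacle is the descent back to $S$: I must find $s_0\in S$ with $xs_0=xt_0$ on $I$, for then $\tilde f(s):=ss_0$ provides the required extension; equivalently, $\phi$ must factor through $S$. Flatness alone only places $t_0$ somewhere in $T$, so this step has to leverage the extra structure of the extension. A natural route is to combine the adjunction $\HomX{S}{N}{T}\cong \HomX{T}{T\otimes_S N}{T}$ with flatness of $T_S$ and self-injectivity of $T$ to deduce that $T$ is injective as a left $S$-module, and then to use the closure property $It_0\subseteq S$ to correct $t_0$ by an element of the right $T$-annihilator of $I$ so that the corrected element lands in $S$.
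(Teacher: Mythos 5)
Your reduction steps coincide with the paper's: both you and the author tensor the ideal inclusion up to $T$, use flatness of $T_S$ to identify $T\otimes_S I$ with the left ideal $TI\subseteq T$, and invoke Baer's criterion for $T$ to produce $t_0\in T$ with $f(x)=xt_0$ for all $x\in I$. The step you single out as the ``main obstacle'' --- replacing $t_0$ by some $s_0\in S$ --- is a genuine gap, and your sketched fix (correcting $t_0$ by an element of the right $T$-annihilator of $I$) cannot close it: take $S=\mathbb{Z}\subseteq T=\mathbb{Q}$, so that $T_S$ is flat and $T$ is left self-injective, and let $I=2\mathbb{Z}$ with $f(2n)=n$. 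Then necessarily $t_0=1/2$, the right annihilator of $I$ in $T$ is zero so no correction is available, and indeed $\mathbb{Z}$ is not self-injective. The lemma is therefore false in the stated generality, and the descent from $T$ to $S$ is not a technical nuisance but an unbridgeable obstruction.

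You should also know that the paper's own proof founders on exactly this point: after obtaining the $T$-linear extension $\tilde h:T\to T$ it declares $h:=\imath\tilde h$ to be ``the desired $S$-linear map'', but this $h$ sends $s\mapsto st_0$ and takes values in $T$, not in $S$; the containment $h(S)\subseteq S$ is never verified and fails in the example above. What both arguments do correctly prove is the weaker statement that $T$ is injective as a left $S$-module (your adjunction $\HomX{S}{N}{T}\cong\HomX{T}{T\otimes_S N}{T}$ together with exactness of $T\otimes_S-$ gives this immediately). To recover the lemma one needs an extra hypothesis pushing $t_0$ into $S$, for instance that $S$ is a direct summand of $T$ as an $(S,S)$-bimodule: a left $S$-linear retraction $\pi:T\to S$ with $\pi|_S=\mathrm{id}$ yields $s_0:=\pi(t_0)$ satisfying $xs_0=\pi(xt_0)=\pi(f(x))=f(x)$ for $x\in I$. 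In the Hopf-theoretic applications the paper has in mind such a retraction (a trace map) is typically present, so the downstream corollary can likely be salvaged, but neither your proposal nor the printed proof establishes the lemma as stated.
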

\begin{proof}
Let $I$ be a left ideal of $S$, denote the inclusion map by $f:I
\longrightarrow S$ and
let $g:I\longrightarrow S$ be an $S$-linear map.
Let $\gamma: T \longrightarrow T\otimes_S S$ be the canonical isomorphism.
Then $\gamma$ is left $T$-linear and $\gamma|_{TI} : TI \longrightarrow
T\otimes_S I$ is also an isomorphism of
left $T$-modules. Let $\tilde{f}:=\gamma (1\otimes f)\gamma^{-1} : TI
\longrightarrow T$.
As $T_S$ is flat, $\tilde{f}$ is injective. Also set
$\tilde{g}:= \gamma (1\otimes g)\gamma^{-1}: TI \longrightarrow T$.
Then we can consider the following diagram with exact rows, where
$\imath: S \longrightarrow T$  denotes the inclusion map (which is of course
just $S$-linear):
$$\begin{CD}
0 @>>> I @>f>> S \\
@.@V{\imath}VV @V{\imath}VV \\
0 @>>> TI @>{\tilde{f}}>> T \\
@.@V{\tilde{g}}VV \\
@.T
\end{CD}$$
As $T$ is left self-injective, there there exists a $T$-linear map
$\tilde{h}: T \longrightarrow T$ such that $\tilde{f}\tilde{h} = \tilde{g}$.
Hence the outer trapezoid also commutes, i.e
$f\imath \tilde{h} = \imath\tilde{g}$. Since for all $x \in I: \: (x)\imath
\tilde{g}=(x)g$
we may identify $\imath\tilde{g}$ with $g$
and take $h:=\imath \tilde{h}$ to be the desired $S$-linear map.
\end{proof}

\subsection{}
We will finish with the following result on the transfer of regularity and injectivity to the subring of invariants of a module algebra which should be compared to \cite[Theorem A]{GOPV}.

\begin{cor}
 Let $H$ be Hopf algebra acting on $A$. Suppose $H$ has a right integral $t$ and $A$ has a central element $z$ such that $S(t)\cdot z = 1$.
If $A$ is regular and left self-injective ring, then $A^H$ is regular and left self-injective.
\end{cor}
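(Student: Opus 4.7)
The plan is to establish regularity first, then left self-injectivity of $A^H$.

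By Proposition~\ref{PropCentralTrace}, the hypothesis $S(t)\cdot z = 1$ yields a Casimir element $c = \sum_{(t)} (1\#S(t_1))\otimes(z\#t_2)$ of $\AH$ over $A$ acting unitarily on $A$. Corollary~\ref{CasimirTraceOne3} then supplies two structural facts about $A$ as an $\AH$-module simultaneously: $A$ is projective in $\sAH$ and it is $(\AH, A)$-semisimple. These are the only consequences of the Casimir element that the remainder of the argument uses.

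Next I would verify the two conditions of Proposition~\ref{RegularRelativeFixRing} for $B = \AH$. Semi-projectivity of $_\AH A$ follows directly from projectivity in $\sAH$: for any $x\in A^H$ the $\AH$-linear map $f\in\EndX{\AH}{A}$ defined by $(a)f=ax$ has image $Ax\in\sAH$, so every $g\in\HomX{\AH}{A}{Ax}$ lifts through $f$ to some $h\in\EndX{\AH}{A}$ satisfying $hf=g$; this is precisely $\HomX{\AH}{A}{Af}=\EndX{\AH}{A}f$. For the second condition, given $x\in A^H$ the left ideal $Ax$ is $\AH$-stable and, because $A$ is a regular ring, is a direct summand of $A$ as left $A$-module; the $(\AH, A)$-semisimplicity then lifts that $A$-splitting to an $\AH$-splitting, so $Ax$ is a direct summand of $A$ as $\AH$-module. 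Proposition~\ref{RegularRelativeFixRing} now forces $A^H$ to be von Neumann regular.

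Finally I would apply Lemma~\ref{InjectivitiyLemma} to the inclusion $S=A^H\subseteq A=T$. Since $A^H$ has just been shown to be von Neumann regular, every $A^H$-module is flat (by Auslander's theorem recalled in the introduction); in particular $A$ is flat as a right $A^H$-module. As $T=A$ is left self-injective by hypothesis, Lemma~\ref{InjectivitiyLemma} yields that $A^H$ is left self-injective.

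The delicate point in the plan is the middle stage, in the interplay between the ring-theoretic regularity of $A$, which is what produces $Ax$ as a direct summand of $A$ as a left $A$-module, and the Casimir-supplied $(\AH, A)$-semisimplicity, which is the device that upgrades the $A$-splitting into an $\AH$-splitting and allows semi-projectivity to be read off from projectivity in $\sAH$. No finiteness assumption on $H$ or on its action is required, because Proposition~\ref{RegularRelativeFixRing} already functions without assuming $A$ to be finitely presented in $\sAH$.
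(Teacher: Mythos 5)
Your proposal is correct and follows essentially the same route as the paper: use the Casimir element to get projectivity of $A$ in $\sAH$ and $(\AH,A)$-semisimplicity, verify the two hypotheses of Proposition~\ref{RegularRelativeFixRing} (semi-projectivity, and that $Ax$ for $x\in A^H$ is an $A$-direct summand by regularity of $A$ whose splitting lifts to an $\AH$-splitting), and then apply Lemma~\ref{InjectivitiyLemma} to $A^H\subseteq A$ using flatness of $A$ over the now-regular $A^H$. You actually supply the justifications (semi-projectivity from projectivity in $\sAH$, and the flatness hypothesis of the lemma) more explicitly than the paper's own, somewhat telegraphic, proof does.
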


\begin{proof}  Since $A$ is $(\AH.A)$-semisimple by \ref{PropCentralTrace}, $A$ is semi-projective as $\AH$-module. 
Take any $x\in A^H$, then $Ax$ is a direct summand in $A$ and by relative semisimplicity also a direct summand as $\AH$-submodule. Thus by \ref{InjectivitiyLemma} $A^H$ regular. Now it follows from  (1) that $A^H$ is also left self-injective. 
\end{proof}

\end{document}